\newtheorem{theorem}[equation]{Theorem}
\newtheorem{corollary}[equation]{Corollary}
\newtheorem{lemma}[equation]{Lemma}
\newtheorem{proposition}[equation]{Proposition}
\newtheorem{remarkplain}[equation]{Remark}
\def\danger{\begin{trivlist}\item[]\noindent%
\begingroup\hangindent=3pc\hangafter=-2
\def\par{\endgraf\endgroup}%
\hbox to0pt{\hskip-\hangindent\dbend\hfill}\ignorespaces}
\def\enddanger{\par\end{trivlist}}
\numberwithin{equation}{section}
\newcommand{\qed}{\hfill $\square$ \medskip}
\newenvironment{proof}[1][Proof]{\noindent\textbf{#1.} }{\qed}
\newcommand{\sgn}{\mathrm{sgn}}
\renewcommand{\int}{\mathrm{int}}
\newcommand{\Ad}{\mathrm{Ad}}
\newcommand{\ad}{\mathrm{ad}}
\newcommand{\Gad}{G_\mathrm{ad}}
\newcommand{\Kad}{K_\mathrm{ad}}
\newcommand{\Cent}{\mathrm{Cent}}
\newcommand{\Stab}{\mathrm{Stab}}
\newcommand{\mH}{\mathcal H}
\renewcommand{\O}{\mathcal O}
\newcommand{\R}{\mathbb R}
\newcommand{\C}{\mathbb C}
\newcommand{\Z}{\mathbb Z}
\newcommand{\N}{\mathcal N}
\newcommand{\G}{G}
\newcommand{\n}{\mathfrak n}
\renewcommand{\sl}{\mathfrak s\mathfrak l}
\newcommand{\K}{\mathcal K}
\renewcommand{\k}{\mathfrak k}
\newcommand{\ch}[1]{#1^\vee}
\newcommand{\Lie}{\mathrm{Lie}}
\renewcommand{\t}{\mathfrak t}
\newcommand{\g}{\mathfrak g}
\newcommand{\gder}{\mathfrak g_{\mathrm{der}}}
\newcommand\inv{^{-1}}
\newcommand{\SL}{\text{SL}}
\newcommand{\s}{\mathfrak s}
\newcommand{\w}{\mathfrak w}
\newcommand{\AV}{\mathrm{AV}}
\newcommand{\KS}{\mathrm{KS}}
\newcommand{\Wh}{\mathrm{Wh}}
\newcommand{\WF}{\mathrm{WF}}
\newcommand{\AC}{\mathrm{AC}}
\newcommand{\AVann}{\mathrm{AV}_{\mathrm{ann}}}
\newcommand{\GK}{\mathrm{GK}}
\newcommand{\Op}{\O_p}
\newcommand{\Kostant}[1]{\mathcal{K}(#1)}
\newcommand{\ECom}{\mathcal{E}^{\mathcal{C}}_\Omega}
\begin{document}

\title{Nilpotent Invariants \\ for Generic Discrete Series of Real Groups}
\author{Jeffrey Adams\footnote{University of Maryland \& IDA Center for Computing Sciences ; \url{jda@math.umd.edu}} {} \& Alexandre Afgoustidis\footnote{CNRS \& Université de Lorraine, Nancy \& Metz ; \url{alexandre.afgoustidis@math.cnrs.fr}}}
\date{}

\maketitle

Let $G$ be a connected complex reductive group, defined over $\R$, and consider the group $G(\R)$ of real points. 

We are interested in representations of~$G(\R)$ which admit Whittaker models. For this we consider pairs $(B,\eta)$ where 
\begin{enumerate}[(i)]
\item $B$ is a Borel subgroup of~$G$ defined over~$\R$;
\item $\eta$ is a character of the nilpotent radical~$N(\R)$ of~$B(\R)$, subject to the condition that~$\eta$ is nontrivial on each simple root space.
\end{enumerate}
The existence of a Borel subgroup satisfying (i) is the definition of $G$ being quasisplit, which we assume holds from now on.

Recall that a representation $\pi$ of~$G(\R)$ is said to {have a $(B,\eta)$-Whittaker model} if it has a nonzero distribution vector on which $N(\R)$ acts by $\eta$ (for a more careful statement of the definition see Section~\ref{sec:invariants}).
This condition only depends on the conjugacy class of the pair, 
so we  consider pairs $(B,\eta)$ modulo conjugation by $G(\R)$, and refer to such a conjugacy class
as a {\it Whittaker datum}.
If $\w$ is a Whittaker datum, it makes sense to talk about $\pi$ having a $\w$-Whittaker model. We let $\Wh(\pi)$  be the set of
Whittaker data $\w$ such that $\pi$ has a $\w$-Whittaker model. This is a small finite set.
We say $\pi$ is {\it generic} if $\Wh(\pi)$ is non-empty.

The set $\Wh(\pi)$ is
a useful invariant of generic representations.
For instance, in the local Langlands correspondence, if $\Pi$ is a tempered $L$-packet for $G(\R)$ and $\w$ is a Whittaker datum, then $\Pi$ contains a unique representation
having a $\w$-Whittaker model. This plays a role in describing the internal structure of $L$-packets.
If $\Pi$ is a discrete series $L$-packet then each generic representation in the packet has a unique
Whittaker model, so the generic representations in~$\Pi$ are parametrized by their Whittaker models.
It is conjectured that these properties hold in the $p$-adic case as well, and this is known in many cases.

There are other invariants attached to an
irreducible representation $\pi$.  We will be concerned with two of
them, defined very differently. The first is the \emph{wavefront set} $\WF(\pi)$,
which is an analytic invariant.
The second is the \emph{associated variety} $\AV(\pi)$, which is
defined  algebraically.
Both consist of nilpotent elements of the dual of the Lie algebra of~$G$.
The invariants $\Wh(\pi), \WF(\pi)$ and~$\AV(\pi)$ are closely related to each other.

We now focus on the case of a generic discrete series representation
$\pi$ of $G(\R)$, so $\Wh(\pi)$ is a single Whittaker datum.  In this
case the invariants $\Wh(\pi), \WF(\pi)$ and $\AV(\pi)$ determine each
other. This is well known to experts, but not so easy to extract
from the literature. 

The purpose
of this paper is to give a self-contained account of these matters. More precisely, we will do two things:
\begin{enumerate}
\item Give an elementary proof that  passage from~$\pi$ to $\mathfrak{w}(\pi)$, $\WF(\pi)$ and $\AV(\pi)$ defines bijections between generic discrete series in an $L$-packet for $G(\R)$, Whittaker data for $G(\R)$, and the appropriate sets of nilpotent orbits;
\item Given one of the invariants $\Wh(\pi)$, $\WF(\pi)$ or $\AV(\pi)$, explain in the simplest possible terms how to explicitly reconstruct the other two.
\end{enumerate}
Many results in this note are well known. We give new and simplified proofs for as many of them as we could; for instance, Section~\ref{sec:Sekiguchi} contains a simple proof that $\WF(\pi)$ and $\AV(\pi)$ are exchanged by the Kostant--Sekiguchi correspondence. Otherwise we give convenient references.

\section{Some invariants of irreducible representations}

\subsection{Basic notation}

\subsubsection*{Involutions}

Let~$\sigma$ be the anti-holomorphic involution defining~$G(\R)$.
Set $\g=\Lie(G)$ and $\g(\R)=\Lie(G(\R))$. 
Choose a Cartan involution $\theta$ for $G$. This means:
$\theta$ is an algebraic involution, commuting with~$\sigma$,
and $G(\R)^\theta$ is a maximal compact subgroup of~$G(\R)$. Set $K=G^\theta$ and $K(\R)=G(\R)^\theta=K^\sigma$.
Let $\s$ be the $(-1)$-eigenspace of $\theta$ acting on~$\g$, where we use the notation $\sigma, \theta$ also for the automorphisms of~$\g$ induced by the above involutions.

\subsubsection*{Nilpotent orbits in $\g^*$} 
\label{s:dual}
Let $\N$ be the set of nilpotent elements of $\g$. Several of the invariants below are related to~$\N$, but are more naturally defined as subsets of the dual vector space~$\g^\ast$.
Therefore we need to pass to the dual Lie algebra $\g^*$, and it is important to do this as independently as possible of a choice
of isomorphism $\g\simeq\g^*$. 

First we define the nilpotent cone in $\g^*$ as follows. 
We say an element of~$\g^*$ is \emph{nilpotent} if its $G$-orbit is a (weak) cone, i.e. closed under multiplication by~$\C^*$,
and \emph{regular} if $\dim_{\Cent(G)}(X)=\mathrm{rank}(G)$.
Let $\N^*$ be the set of nilpotent elements.
The principal nilpotent orbit $\O_p\in \N^*$ is the set of regular nilpotent elements, and $\overline{\O_p}=\N^*$.

The statements of the main results below do not require the choice of an isomorphism $\g\simeq \g^*$. However such a choice appears in the proofs, so we spell this out.
Having a $G(\R)$-invariant bilinear form on $\g(\R)$ is equivalent to having a $G(\R)$-equivariant isomorphism $\g(\R)\simeq \g^*(\R)$. 
Suppose we fix a non-degenerate, $G(\R)$-invariant bilinear form $\kappa$ on $\g(\R)$. This is equivalent to having a non-degenerate bilinear form on~$\g$ which is both $G(\C)$- and Galois-invariant. The restriction of the form to each simple factor
is unique up to a nonzero constant. For example we can use the Killing form on $\gder=[\g, \g]$, extended arbitrarily by a non-degenerate form on the center.

Given such a non-degenerate bilinear form $\kappa$, we define
$\psi_\kappa: \g\rightarrow \g^*$ as usual:
$$
\psi_\kappa(X)(Y)=\kappa(X,Y)\quad (X,Y\in\g).
$$

Using $\psi_\kappa$ we shall transfer to $\g^\ast$ several notions usually defined on $\g$: the asymptotic cone, the Kostant section, and the Kostant--Sekiguchi corresondence. In each case the resulting notions are independent of the
choice of $\kappa$. See Sections~\ref{s:gds} and \ref{sec:Sekiguchi}.
In Section~\ref{s:AV} we need to be more precise: we choose  $\kappa$ to be the Killing form  (or any non-degenerate
invariant form which is negative definite on the Lie algebra of a maximal compact subgroup.)

\subsubsection*{Representations} 
We need to pass back and forth between representations of $G(\R)$ and $(\g,K)$-modules.
See \cite{greenbook} for details.

Suppose $\pi$ is an irreducible admissible representation of $G(\R)$  on a Hilbert space. 
The underlying
$(\g,K)$-module is on the space of $K$-finite vectors in
$V$; we will denote it by~$\pi_K$. 

Conversely if $\pi$ is an irreducible $(\g,K)$-module it can be
realized as the $K$-finite vectors of a Hilbert space representation, which we
refer to as the Hilbert space globalization of $\pi$.

The notion of equivalence of $(\g, K)$-modules is defined in such a way that two equivalent $(\g, K)$-modules have equivalent Hilbert space globalizations, and that any two equivalent admissible representations of~$G$ have equivalent underlying $(\g, K)$-modules. Therefore if~$\pi$ refers an equivalence class of irreducible Hilbert space representations of~$G(\R)$ (as opposed to an actual representation in the equivalence class), then it makes sense to write~$\pi_K$ for the corresponding equivalence class of $(\g, K)$-modules.

\subsection{The invariants}\label{sec:invariants}

\subsubsection*{Associated variety and Gelfand--Kirillov dimension}

Suppose $\pi$ is an irreducible $(\g,K)$-module.

The associated variety of $\pi$, denoted $\AV(\pi)$, is a closed, $K$-invariant set in~$\s^*$; so it is a finite union of
nilpotent $K$-orbits on $\s^*$. For the definition, and many properties, see \cite[Section 5]{vogan_bowdoin}.

A related invariant is the associated variety of the annihilator $I_\pi$ of $\pi$ in the universal enveloping algebra, 
denoted $\AVann(\pi)$.
This is a subset of $\N^*$ of~$\g^*$, and is the closure of a single complex nilpotent orbit: see \cite[Section 1, Theorem~4.7]{vogan_bowdoin}. Both $\AV(\pi)$ and $\AVann(\pi)$ depend only on the equivalence class of~$\pi$.

The Gelfand--Kirillov dimension~$\GK(\pi)$ of~$\pi$ can be defined in several different ways~\cite{vogan-gelfand-kirillov}; but one of them is
$$
GK(\pi)=\dim(\AV(\pi))=\frac12\dim(\AVann(\pi)).
$$
(All dimensions are complex unless otherwise stated.)

We define the associated variety of a Hilbert space representation to be that of  its underlying $(\g,K)$-module.

\subsubsection*{Wavefront set}

Now suppose $\pi$ is an irreducible admissible representation of $G(\R)$ on a Hilbert space $\mH$. 
The \emph{wave-front
  set} $\WF(\pi)$ is an analytic invariant determined by the
singularity of the global character of $\pi$ near the identity. See \cite{howe_wave_front, bv_local_structure, HarrisHeOlafsson} for the definition. 
The set~$\WF(\pi)$  is a closed $G(\R)$-invariant subset of the nilpotent cone in
$i\g(\R)^*$. As such it is the union of a finite number of nilpotent $G(\R)$-orbits. Again, this depends only on the equivalence class of~$\pi$.

We define the wave front set of a $(\g,K)$-module to be that of its 
Hilbert space globalization. 

\subsubsection*{Whittaker models}

Suppose again that $\pi$ is an irreducible admissible representation of $G(\R)$ on a Hilbert space $\mH$. We consider the corresponding actions of $G(\R)$ on the space~$\mH^\infty$ of smooth vectors in $\mH$, and on its dual $\mH^{-\infty}$. Let~$(B, \eta)$ be a pair satisfying conditions (i)-(ii) of the Introduction. We say $\pi$ has a $(B,\eta)$-Whittaker model
if there exists a nonzero element $v$ in~$\mH^{-\infty}$ such that $\pi(n)(v)=\eta(n)v$ for all $n$ in the nilradical $N(\R)$ of $B(\R)$. See~\cite{matumoto}. If~$\w$ is the corresponding Whittaker datum, i.e. the $G(\R)$-conjugacy class of the pair $(B, \eta)$, we say~$\pi$ has a $\w$-Whittaker model.  

We write $\Wh(\pi)$ for the set of Whittaker data~$\w$ such that $\pi$ has a $\w$-Whittaker model.
If $\pi$ has a unique Whittaker datum $\w$ we write $\Wh(\pi)=\w$.

We define the Whittaker data  of a $(\g,K)$-module to be that of its 
Hilbert space globalization.

\subsection{Large representations} Set $N=\dim(G)-\mathrm{rank}(G)$. This is the dimension of the nilpotent cone~$\N$. The maximal Gelfand--Kirillov dimension of a representation is $N/2$.
We say $\pi$ is {\it large} if $\GK(\pi)=N/2$. See \cite[Section~6]{vogan-gelfand-kirillov}.

\begin{lemma}
  \label{l:large}
  The following conditions are equivalent.
  \begin{enumerate}
  \item $\pi$ is large\textup{;}
        \item $\pi$ is generic\textup{;}
\item $\AVann(\pi)=\N^*=\overline{\Op}$\textup{;}
\item $\dim(\AVann(\pi))=N$\textup{;}
  \item $\dim(\AV(\pi))=N/2$\textup{;}
  \item $\GK(\pi)=N/2$.
\end{enumerate}
\end{lemma}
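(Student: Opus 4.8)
The plan is to establish the chain of equivalences in two halves: first the "trivial" equivalences (6)$\Leftrightarrow$(5)$\Leftrightarrow$(1) and (3)$\Leftrightarrow$(4), which follow immediately from definitions and the irreducibility of $\AVann(\pi)$; then the substantive links, which are (1)$\Leftrightarrow$(2) (genericity $\Leftrightarrow$ largeness) and the passage between $\AV$ and $\AVann$, i.e. (5)$\Leftrightarrow$(4). I would organize the write-up around proving (2)$\Rightarrow$(6)$\Rightarrow$(4)$\Rightarrow$(2) as a cycle, picking up (3) and (5) along the way.

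First I would dispose of the formal equivalences. By the displayed formula $\GK(\pi)=\dim\AV(\pi)=\tfrac12\dim\AVann(\pi)$ we get (6)$\Leftrightarrow$(5)$\Leftrightarrow$(4) at once, and (1) is just (6) by the definition of "large." Since $\AVann(\pi)$ is the closure of a single nilpotent $G$-orbit in $\g^*$, and since the unique open (dense) nilpotent orbit is $\O_p$ with $\dim\O_p=N$ and $\overline{\O_p}=\N^*$, we have $\dim\AVann(\pi)=N$ if and only if $\AVann(\pi)=\overline{\O_p}=\N^*$; this is (3)$\Leftrightarrow$(4). So everything reduces to showing that genericity is equivalent to any one of these, say to $\AVann(\pi)=\N^*$.

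For (2)$\Rightarrow$(3): if $\pi$ has a $\w$-Whittaker model, then there is a nonzero Whittaker functional, and by the standard argument (Kostant, Matumoto — see \cite{matumoto}) the existence of a nondegenerate Whittaker functional forces the annihilator $I_\pi$ to be contained in the primitive ideal whose associated variety is $\overline{\O_p}$; more concretely, the Whittaker vector produces, via the associated graded of a suitable filtration, a nonzero $N$-equivariant element that can only be supported on the principal nilpotent orbit, so $\O_p\subseteq\AVann(\pi)$, hence $\AVann(\pi)=\overline{\O_p}=\N^*$. For the converse (3)$\Rightarrow$(2), equivalently $\dim\AV(\pi)=N/2\Rightarrow$ generic: here I would invoke the theorem of Matumoto (and Kostant in the quasisplit case), which says precisely that an irreducible admissible representation admits a Whittaker model for some Whittaker datum if and only if it is "large" in the sense that its wavefront set contains the principal (regular) nilpotent orbit — equivalently $\GK(\pi)=N/2$. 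One can also route this through the wavefront set $\WF(\pi)$ and the result of Barbasch–Vogan relating $\WF(\pi)$ to $\AVann(\pi)$ (the real orbits in $\WF(\pi)$ lie in the complex orbit $\AVann(\pi)$, and the top-dimensional ones fill it out), together with the fact that genericity of $\pi$ is equivalent to $\mathcal O_p^{\,\R}\subseteq\WF(\pi)$ for some real form of the regular orbit.

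The main obstacle is the implication (3)$\Rightarrow$(2), i.e. recovering a Whittaker model from the purely algebraic condition on the associated variety. The forward direction is "soft" (a Whittaker vector manifestly makes the representation large), but the reverse requires genuine input — essentially Matumoto's theorem (generalizing Kostant's for split groups) that large representations are generic. I expect that the cleanest route in this self-contained note is to cite \cite{matumoto} for this one implication and to give the elementary arguments for all the others, as sketched above; alternatively, if one wants to minimize external citations, one could prove it by exhibiting the Whittaker functional directly on a large representation using the Casselman–Wallach smooth globalization and the structure of $N(\R)$-coinvariants, but that is more involved and I would not attempt it here.
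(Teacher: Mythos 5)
Your proposal is correct and follows essentially the same route as the paper, which offers no argument of its own beyond citing Vogan's Bowdoin lectures (Corollary 4.7 and Theorem 8.4): the formal equivalences (1)$\Leftrightarrow$(4)$\Leftrightarrow$(5)$\Leftrightarrow$(6) and (3)$\Leftrightarrow$(4) are exactly the definition-chasing you describe, and the single substantive input, large $\Leftrightarrow$ generic, is the Kostant--Matumoto--Vogan theorem that the paper itself also invokes (via Vogan's Gelfand--Kirillov paper and Kostant's Whittaker paper) in the proof of Proposition~\ref{invariants_ds}. Your decision to cite that equivalence rather than prove it is the same choice the authors make.
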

See \cite{vogan_bowdoin}, in particular Corollary 4.7 and Theorem 8.4.

From now on we use the terms {\it large} and {\it generic} interchangeably.

\subsection{Discrete series and Harish-Chandra parameters} \label{sec:discrete_series}

Recall $G(\R)$ has relative discrete series representations if and only if it has a relatively compact Cartan subgroup, i.e. there exists a $\theta$-stable Cartan subgroup~$T$ of~$G$  such that~$T(\R) \cap G_d$ is  compact, where $G_d$ is the derived group of~$G$. Throughout the paper, we assume $G(\R)$ does have discrete series representation, we fix  a Cartan subgroup $T \subset G$ satisfying the conditions above, and we let~$\mathfrak{t}$ denote the Lie algebra of~$T$, which is a Cartan subalgebra of~$\g$. 

Choose a  positive system~$\Delta^+$  for the roots of~$\t$ in~$\g$, and let~$\rho \in \mathfrak{t}^\ast$ be the half-sum of positive roots. Let~$\Lambda \subset \mathfrak{t}^\ast$ be the set of elements~$\lambda \in \mathfrak{t}^\ast$ which are \emph{regular} (i.e. the linear form $\lambda$ does not vanish on any coroot), and such that $\lambda-\rho$  is the differential of a character of~$T(\R)$. The action of the Weyl group~$W=W(\g, \t)$ preserves~$\Lambda$. Let~$W_K\subset W$ be the subgroup $\mathrm{Norm}_K(T)/T$.

Then any discrete series representation of~$G(\R)$ has a \emph{Harish-Chandra parameter}, which is an element of~$\Lambda/W_K$. In fact, suppose~$\lambda$ is an element of~$\Lambda$ and suppose $\zeta$ is a character of the center~$Z(\R)$ of~$G(\R)$ whose differential agrees with~$\lambda-\rho$ on the intersection $T(\R) \cap Z(\R)$; then the pair $(\lambda, \zeta)$  determines a discrete series representation~$\pi_{\lambda, \zeta}$ of~$G(\R)$. Every discrete series representation arises in this way, and $\pi_{\lambda, \zeta}=\pi_{\lambda', \zeta'}$ if and only if $\zeta=\zeta'$ and $\lambda, \lambda'$ are $W_K$-conjugate. See  \cite[Section 8]{AV1} and \cite{Contragredient} for more on the parametrization.  

Recall that the \emph{Weyl chambers} of~$\mathfrak{t}^\ast$ are the connected components of the set of regular elements. Each Weyl chamber specifies a positive root system for the roots of $\mathfrak{t}$ in~$\g$, and we say a Weyl chamber is \emph{large} if the corresponding simple roots are all noncompact. 

If~$\lambda \in \mathfrak{t}^\ast$ is a representative for the Harish-Chandra parameter of a discrete series representation $\pi$, then~$\pi$ is generic if and only if the Weyl chamber containing~$\lambda$ is large. See~\cite[Section 6]{vogan-gelfand-kirillov}.

\subsection{The invariants for generic discrete series} 

\begin{proposition} \label{invariants_ds}
  Suppose $\pi$ is a generic discrete series representation of $G(\R)$. Then\textup{:}
\begin{enumerate}[(a)]
\item $\Wh(\pi)$ is a single Whittaker datum\textup{;}
\item $\AV(\pi)$ is the closure of a single $K$-orbit on~$\mathcal{O}_p \cap \s^*$\textup{;}
\item  $\WF(\pi)$ is the closure of a single $G(\R)$-orbit on $\mathcal{O}_p \cap  i \g(\R)^*$\textup{.}
\end{enumerate}
\end{proposition}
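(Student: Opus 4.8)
The plan is to treat the three parts together, since they all flow from the same facts about generic discrete series: largeness (Lemma~\ref{l:large}) and the description of the Harish-Chandra parameter via a large Weyl chamber (Section~\ref{sec:discrete_series}). For part (a), I would combine Lemma~\ref{l:large} — which says $\pi$ generic, so $\Wh(\pi)\neq\emptyset$ — with the uniqueness statement in the discrete series case alluded to in the Introduction: if $\Pi$ is a discrete series $L$-packet and $\w$ a Whittaker datum, then $\Pi$ contains a unique $\w$-generic member, and conversely a generic member of $\Pi$ has a unique Whittaker model. One then needs a reference (Vogan's work on the Langlands classification and genericity, or Kostant's results on Whittaker vectors) giving that a discrete series representation has at most a one-dimensional space of Whittaker vectors for any fixed $(B,\eta)$, and that two inequivalent $(B,\eta)$ giving Whittaker vectors for the same $\pi$ must be $G(\R)$-conjugate; alternatively this can be deduced from the multiplicity-one statement $\dim\mathrm{Wh}_{B,\eta}(\pi)\le 1$ together with the computation of the ``$\mathfrak{w}(\pi)$'' map that appears in the body of the paper.

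For parts (b) and (c), the key is that $\pi$ is large, so by Lemma~\ref{l:large}(3) we have $\AVann(\pi)=\overline{\O_p}$, i.e. the complex associated variety of the annihilator is the whole nilpotent cone. Then $\AV(\pi)\subset\s^*$ and $\WF(\pi)\subset i\g(\R)^*$ are real forms of the principal orbit in the appropriate sense: $\AV(\pi)$ is $K$-invariant, closed, contained in $\overline{\O_p}\cap\s^*$, and of dimension $N/2$ by Lemma~\ref{l:large}(5); since each $K$-orbit on $\O_p\cap\s^*$ already has dimension exactly $N/2$ (the real form of a complex orbit has half its dimension), $\AV(\pi)$ must be the closure of a union of such orbits, and I must upgrade ``union'' to ``single orbit.'' The same applies to $\WF(\pi)$ with $G(\R)$ in place of $K$. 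The irreducibility — that it is a \emph{single} orbit, not a union — is what needs genuine input: for $\AV(\pi)$ one can invoke the fact, due to Vogan, that $\AV(\pi)$ is equidimensional and, for an irreducible $(\g,K)$-module, carries the structure of the support of a single irreducible object, so cannot be a reducible union of top-dimensional pieces unless the module is reducible; alternatively one uses that the multiplicities on $\AV(\pi)$ are governed by the lowest $K$-types and a generic discrete series has a single lowest $K$-type, forcing a single orbit. For $\WF(\pi)$ the cleanest route is to defer to the Kostant–Sekiguchi-type correspondence proved later in the paper (Section~\ref{sec:Sekiguchi}), which matches $\WF(\pi)$ with $\AV(\pi)$ orbit by orbit, so that (c) follows from (b); but if one wants (c) independently, one uses Rossmann's or Barbasch–Vogan's description of $\WF(\pi)$ for discrete series in terms of the wavefront cycle and the fact that a discrete series character has a single leading term.

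The main obstacle I anticipate is precisely the passage from ``top-dimensional union of orbits'' to ``closure of a single orbit'' in parts (b) and (c) — the dimension count and largeness are easy, but irreducibility of the associated variety for a general quasisplit $G(\R)$ requires either the lowest-$K$-type multiplicity argument (which needs the fact that generic discrete series have a unique lowest $K$-type, itself a nontrivial input from Vogan's theory) or a direct geometric argument about $K$-orbits on $\O_p\cap\s^*$. A secondary subtlety is that $\O_p\cap\s^*$ may be empty or may split into several $K$-orbits depending on $G(\R)$, so one must know a priori that it is nonempty for quasisplit $G(\R)$ — this is essentially the statement that the principal nilpotent orbit meets $\s^*$, which holds because $G(\R)$ quasisplit implies there is a principal nilpotent in $i\g(\R)$, hence via the Cartan decomposition a $\theta$-related one in $\s$. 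Once nonemptiness and top-dimensionality of each such orbit are in hand, the proof reduces to the irreducibility input, and I would structure the write-up so that (a) and (b) are proved from the stated lemmas plus one external multiplicity-one / unique-lowest-$K$-type citation, and (c) is then immediate from the Kostant–Sekiguchi statement of Section~\ref{sec:Sekiguchi}.
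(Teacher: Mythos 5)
Your overall framing---largeness plus a dimension count reduces everything to showing that the top-dimensional invariant is \emph{irreducible}, i.e.\ the closure of a single orbit---matches the shape of the problem, and for part~(a) you point at essentially the right inputs: Kostant and Vogan for the existence of some Whittaker model of a large representation, and a separate uniqueness statement for large discrete series (the paper cites \cite[Lemma~14.14]{abv} for the latter). Be careful, though, that multiplicity one for a \emph{fixed} pair $(B,\eta)$ says nothing about how many conjugacy classes of pairs admit a nonzero functional, so that particular route to uniqueness of the Whittaker \emph{datum} does not work; uniqueness of the datum is a genuinely different theorem and must be cited as such.

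The real gap is in your irreducibility step for (b) and (c). The claim that the associated variety of an irreducible $(\g,K)$-module ``cannot be a reducible union of top-dimensional pieces unless the module is reducible'' is false: irreducible Harish-Chandra modules very often have associated varieties with several irreducible components of equal top dimension, so irreducibility of the module buys nothing. The lowest-$K$-type argument is too vague to carry the weight, and is not how this is established; the paper simply invokes \cite[Proposition~A.9]{AV1}, whose content is the explicit formula $\AV(\pi)=K\cdot\psi_\kappa(F_\pi)$ with $F_\pi=\sum_{\alpha\in S(\lambda)}X_{-\alpha}$ (restated as Lemma~\ref{l:AV}): the single orbit is exhibited, not deduced abstractly. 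For (c), deferring to Section~\ref{sec:Sekiguchi} is circular within this paper, since the elementary proof of Proposition~\ref{Sekiguchi_result} given there already uses Proposition~\ref{invariants_ds} (through the single-orbit abuse of notation built into Lemma~\ref{l:AV} and Corollary~\ref{cor:matumoto}); citing Schmid--Vilonen instead would be non-circular but is exactly the heavy machinery the paper is trying to avoid, and ``a discrete series character has a single leading term'' is essentially a restatement of what is to be proved. The paper's actual argument for (c) supplies the missing irreducibility input differently: since $\pi$ is generic, a representative $\lambda$ of its Harish-Chandra parameter lies in a \emph{large} Weyl chamber $\t^+$; Rossmann's Lemma~B then shows that the limit of the orbits $G(\R)\cdot x$ as $x\to 0$ in $\t^+$ is a \emph{single} $G(\R)$-orbit, and the equivalence of (L1) and (L2) in \cite{rossmann_limit_orbits}, combined with $\WF(\pi)=\AC(G(\R)\cdot\lambda)$ (Lemma~\ref{lem:WF_and_AC}), identifies $\WF(\pi)$ with the closure of that orbit.
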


\begin{proof} For (a), by \cite{vogan-gelfand-kirillov} and \cite{kostant_whittaker}
a representation $\pi$ is large if and only if it admits a Whittaker model for some Whittaker datum,
and by \cite[Lemma 14.14]{abv} a  large discrete series representation admits a unique Whittaker datum.

For (b) see \cite[Proposition A.9]{AV1}.

Part~(c) follows from~\cite[Lemma B]{rossmann_limit_orbits}. Let us explain how to go from there to  assertion~(c), using standard facts on the asymptotic cone recalled below (\S\,{}\ref{sec:asymp_cone}).

Let~$\mathfrak{t}^+$ be a Weyl chamber in~$\mathfrak{t}^\ast$ containing a representative~$\lambda$ for the Harish-Chandra parameter of~$\pi$. Write $\lim_{x \to 0(\mathfrak{t}^+)}\left[ G(\R)\cdot x\right]$ for the set of all elements $y \in i\g(\R)^\ast$ such that every $G(\R)$-invariant neighborhood of~$y$ contains all elements of~$\mathfrak{t}^+$ sufficiently close to zero. By Lemma B in~\cite{rossmann_limit_orbits}, if $\mathfrak{t}^+$ is a large Weyl chamber, then $\lim_{x \to 0(\mathfrak{t}^+)} \left[G(\R)\cdot x\right]$ is a single $G(\R)$-orbit.  Since this does happen when~$\pi$ is a generic discrete series representation, to prove assertion~(c)  it is enough to check the closure of $\lim_{x \to 0(\mathfrak{t}^+)} \left[G(\R)\cdot x\right]$ contains $\WF(\pi)$ in that case. Now, the equivalence between conditions  (L1) and (L2) on page 2 of~\cite{rossmann_limit_orbits} implies that the closure of $\lim_{x \to 0(\mathfrak{t}^+)} \left[G(\R)\cdot x\right]$ contains the asymptotic cone of the orbit $G(\R)\cdot \lambda$  (see  Section~\ref{sec:asymp_cone} below). Therefore assertion~(c) follows from the fact that this asymptotic cone is equal to  $\WF(\pi)$  (Lemma~\ref{lem:WF_and_AC} below).
\end{proof}

Because of Proposition~\ref{invariants_ds} we will abuse notation slightly, as follows. If~$\pi$ is a generic discrete series representation of~$G(\R)$ and~$\O$ is a $K$-orbit in~$\s^\ast$, we write $AV(\pi)=\O$ to indicate that $AV(\pi)$ is the closure of the $K$-orbit $\O$. 
Similarly, if $\O\subset i\g(\R)^*$ is a $G(\R)$-orbit, we write $\WF(\pi)=\O$ to indicate that $\WF(\pi)$ is the closure of~$\O$.

\section{The dictionary for generic discrete series}

\subsection{Statement of the dictionary}

Given a generic discrete series representation~$\pi$ we consider the invariants of~$\pi$ defined by Proposition~\ref{invariants_ds}: a Whittaker datum  $\Wh(\pi)$, a $K$-orbit $\AV(\pi)$ in $\mathcal{O}_p \cap \s^*$, and a $G(\R)$-orbit $\WF(\pi)$ in $\mathcal{O}_p \cap  i \g(\R)^*$.

\begin{theorem} \label{th:main} Suppose~$\Pi$ is an $L$-packet of discrete series for~$G(\R)$. 
The maps $\pi \mapsto \Wh(\pi)$, $\pi \mapsto \AV(\pi)$ and $\pi\mapsto \WF(\pi)$ induce bijections between\textup{:}
\begin{enumerate}
\item[(1)] The set~$\Pi_{\mathrm{gen}}$ of generic discrete series representations in~$\Pi$\textup{;}
\item[(2)] The set of Whittaker data for $G(\R)$\textup{;}
\item[(3)] The set of~$K$-orbits on $\mathcal{O}_p \cap \s^*$\textup{;}
\item[(4)] The set of~$G(\R)$-orbits on $\mathcal{O}_p \cap  i \g(\R)^*$\textup{.}
\end{enumerate}
In particular if $\pi$ is a generic discrete series representation then each of these three invariants of $\pi$ determines the other two. 
\end{theorem}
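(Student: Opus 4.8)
The strategy is to establish the four-way bijection by proving that each of the three maps $\pi \mapsto \Wh(\pi)$, $\pi \mapsto \AV(\pi)$, $\pi \mapsto \WF(\pi)$ from $\Pi_{\mathrm{gen}}$ into the respective target sets is a bijection, and then to check that the resulting identifications of the target sets (2), (3), (4) are compatible --- i.e., that they are the ``geometrically natural'' ones (Kostant--Sekiguchi on one side, a suitable correspondence of real forms on the other). First I would fix once and for all the relatively compact $\theta$-stable Cartan $T$ of Section~\ref{sec:discrete_series}, the $L$-packet $\Pi$, and a choice of infinitesimal character $\lambda$-orbit; then the discrete series in $\Pi$ are parametrized (for fixed central character) by $W_K$-orbits of regular elements in a fixed $W$-orbit $W\lambda \subset \Lambda$, equivalently by $W_K$-orbits of Weyl chambers, and by Section~\ref{sec:discrete_series} the \emph{generic} ones correspond exactly to the $W_K$-orbits of \emph{large} Weyl chambers. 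So the combinatorial heart of the matter is to count $W_K$-orbits of large chambers and match this with the three target sets.

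The cleanest bookkeeping device is the following. A large Weyl chamber is one all of whose simple roots are noncompact; two large chambers give the same Whittaker datum iff they are $W_K$-conjugate (this is essentially the content of \cite[Lemma 14.14]{abv} / Proposition~\ref{invariants_ds}(a)), and conversely every Whittaker datum for the quasisplit group $G(\R)$ arises from a large chamber --- here one uses that the real Borels containing a fixed maximally split torus, up to $G(\R)$-conjugacy, together with the generic characters, are in bijection with such chambers (a standard Kottwitz-type count: Whittaker data are a torsor under a quotient of $\mathfrak{t}(\R)$-characters, matching the $W_K$-action on large chambers). Thus $\pi \mapsto \Wh(\pi)$ is a bijection $(1)\to(2)$. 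For $(1)\to(3)$: Proposition~\ref{invariants_ds}(b) says $\AV(\pi)$ is a single $K$-orbit on $\O_p \cap \s^*$; that the map is surjective and injective follows from the theory of the moment map / Sekiguchi --- the $K$-orbits on $\O_p\cap\s^*$ are, via the Kostant--Sekiguchi correspondence, in bijection with $G(\R)$-orbits on $\O_p \cap i\g(\R)^*$, and these in turn are counted by the same set of large chambers via Rossmann's limit construction used in the proof of Proposition~\ref{invariants_ds}(c). For $(1)\to(4)$ the same Rossmann limit argument, which is already essentially carried out in the proof of Proposition~\ref{invariants_ds}(c), shows that $\pi \mapsto \WF(\pi)$ sends the discrete series with chamber $\mathfrak{t}^+$ to the single real form of $\O_p$ obtained as $\lim_{x\to 0(\mathfrak{t}^+)}[G(\R)\cdot x]$, that distinct $W_K$-orbits of large chambers give distinct real forms (injectivity), and that every real form of $\O_p$ arises (surjectivity, since $G(\R)$ is quasisplit so $\O_p$ meets $i\g(\R)^*$ and all its real forms are ``Whittaker'' forms).

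The remaining point --- and the one I expect to require the most care --- is consistency of the three bijections, i.e. that the induced bijection between $K$-orbits on $\O_p\cap\s^*$ and $G(\R)$-orbits on $\O_p\cap i\g(\R)^*$ really is Kostant--Sekiguchi, and that the bijection between these and Whittaker data is the expected one. For the Kostant--Sekiguchi compatibility I would invoke Section~\ref{sec:Sekiguchi} of the present paper, which (per the introduction) proves directly that $\WF(\pi)$ and $\AV(\pi)$ correspond under Kostant--Sekiguchi; this upgrades the bare numerical coincidence $|(3)|=|(4)|$ to a canonical identification compatible with the maps from (1). For the Whittaker side, the key input is that a generic discrete series $\pi$ with Whittaker datum $\w=(B,\eta)$ has $\WF(\pi)$ equal to the closure of the specific nilpotent orbit ``attached to $\eta$'': the cotangent-type construction sending a nondegenerate character of $N(\R)$ to a regular nilpotent in $i\g(\R)^*$, normalized so that it is compatible with the analytic wavefront computation. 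Once all three maps out of (1) are shown to be bijections and the three target-to-target identifications are pinned down as the standard ones, the final assertion --- that any one of $\Wh(\pi)$, $\AV(\pi)$, $\WF(\pi)$ determines the other two --- is immediate, since all three are simultaneously equivalent to the datum of the $W_K$-orbit of the large Weyl chamber of $\pi$.
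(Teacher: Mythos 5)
Your outline identifies the correct skeleton --- parametrize $\Pi_{\mathrm{gen}}$ by $W_K$-orbits of large Weyl chambers and match these against (2), (3) and (4) --- but the three matching claims, which are the entire content of the theorem, are asserted rather than proved. Concretely: (a) for $(1)\to(2)$ you need both that generic discrete series in $\Pi$ with non-$W_K$-conjugate chambers have \emph{distinct} Whittaker data (injectivity) and that \emph{every} Whittaker datum occurs (surjectivity); Proposition~\ref{invariants_ds}(a) and \cite[Lemma 14.14]{abv} only give uniqueness of $\Wh(\pi)$ for a fixed $\pi$, not either of these, and your ``standard Kottwitz-type count \dots matching the $W_K$-action on large chambers'' is precisely the statement to be established, not an argument for it. (b) For $(1)\to(4)$, Rossmann's Lemma~B gives that $\WF(\pi)$ is the closure of a single orbit, but nothing you cite shows that distinct $W_K$-orbits of large chambers produce distinct limit orbits, nor that every $G(\R)$-orbit on $\mathcal{O}_p\cap i\g(\R)^*$ arises as such a limit; ``all its real forms are Whittaker forms'' begs the question. (c) The same objections apply to $(1)\to(3)$, where ``counted by the same set of large chambers'' is again the conclusion dressed as a premise.

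The device the paper uses to close exactly these gaps is the finite group $Q(G)=\Gad(\R)/G(\R)_{\ad}\simeq(\Gad)^\theta/\Kad$: one shows it acts transitively on (1) and \emph{simply} transitively on (2), (3), (4) --- for (3) and (4) this is a short Galois-cohomology computation using that the centralizer of a principal nilpotent is $ZU$ with $U$ unipotent and $H^1(\sigma,U)=1$ (Propositions~\ref{prop:action_on_real_orbits} and~\ref{prop:action_on_K_orbits}) --- and then checks that the three maps are $Q(G)$-equivariant; an equivariant map from a transitive $Q$-set to a simply transitive one is automatically a bijection, with no counting required. Your Kottwitz-type torsor remark is the right instinct (the torsor group is $\ker\bigl(H^1(\sigma,Z)\to H^1(\sigma,G)\bigr)\simeq Q(G)$), but to convert the numerical coincidence into a proof you must actually exhibit compatible torsor structures on all four sets and verify equivariance of the maps; as written, each of the injectivity and surjectivity assertions in your second paragraph is a gap. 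By contrast, once the torsor argument is in place, the consistency checks you defer to the end (Kostant--Sekiguchi compatibility, the explicit Whittaker normalization) are genuinely downstream of the theorem, as in Sections~\ref{sec:explicit} and~\ref{sec:Sekiguchi} of the paper, and need not be folded into its proof.
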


Before embarking on the proof, let us point out that the maps in the
Theorem are defined using rather deep results of representation
theory. However, we shall show in Section~\ref{sec:explicit} that all
bijections $(i) \leftrightarrow (j)$, for $i,j \in \{(1), \dots,
(4)\}$, can be spelled out in elementary terms. Furthermore we shall
prove, using elementary arguments, that (3) $\leftrightarrow$ (4)
coincides with the Kostant--Sekiguchi correspondence.

Let us now explain the strategy of the proof, which is mainly an exercise in putting together some references that are rather scattered in the literature. What we shall actually do is: introduce a finite group~$Q(G)$, and point out that 
\begin{itemize}
\item[(i)] it acts transitively on~(1), and simply transitively on (2)--(4);
\item[(ii)] the maps $\pi \mapsto \Wh(\pi)$, $\pi \mapsto \AV(\pi)$ and $\pi\mapsto \WF(\pi)$  are equivariant for these actions.
\end{itemize}
The theorem follows immediately from these two observations, and from the following basic fact on group actions.

\begin{lemma} Suppose a group $Q$ acts on two sets $A, B$ and let $f\colon A \to B$ be a $Q$-equivariant map. 
\begin{itemize}
\item[(a)] If $Q$ acts transitively on~$B$,  then~$f$ is surjective\textup{;}
\item[(b)] If $Q$ acts transitively on~$A$ and freely on~$B$, then~$f$ is injective.
\end{itemize}
In particular, if~$Q$ acts transitively on~$A$ and simply transitively on~$B$, then~$f$ must be a bijection and the action of~$Q$ on~$A$ must be simply transitive.
\end{lemma}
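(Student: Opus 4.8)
The plan is to prove parts (a) and (b) directly from the definitions of $Q$-equivariance, transitivity, and freeness, and then obtain the ``in particular'' clause by combining them.

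For part (a), I would fix $b \in B$ and a base point $a_0 \in A$ (the set $A$ is nonempty in our application, so choosing $a_0$ is harmless). Put $b_0 = f(a_0)$. Transitivity of the $Q$-action on $B$ yields some $q \in Q$ with $q \cdot b_0 = b$, and equivariance of $f$ gives $f(q \cdot a_0) = q \cdot f(a_0) = q \cdot b_0 = b$; hence $f$ is surjective. For part (b), I would start from $a, a' \in A$ with $f(a) = f(a')$. Transitivity on $A$ produces $q \in Q$ with $q \cdot a = a'$, and then equivariance gives $q \cdot f(a) = f(q \cdot a) = f(a') = f(a)$, so $q$ stabilizes the point $f(a) \in B$. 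Since the action on $B$ is free, this forces $q$ to be the identity element, whence $a' = q \cdot a = a$ and $f$ is injective.

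For the final assertion, recall that a simply transitive action is by definition one that is both transitive and free. Assuming $Q$ acts transitively on $A$ and simply transitively on $B$, parts (a) and (b) show that $f$ is a bijection intertwining the two $Q$-actions, i.e. an isomorphism of $Q$-sets. Transporting the action on $B$ through $f^{-1}$ then identifies the $Q$-action on $A$ with the $Q$-action on $B$, so the former is simply transitive as well.

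There is no genuine obstacle here; the only points meriting a moment's attention are that freeness of the target action is used exactly once, in part (b), precisely to upgrade ``$q$ fixes a point of $B$'' to ``$q = e$'', and that one should keep $A$ nonempty (automatic in our setting, since generic discrete series exist) so that the base point in part (a) is available.
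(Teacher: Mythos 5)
Your proof is correct and complete; the paper itself states this lemma as a ``basic fact on group actions'' and offers no proof at all, so there is nothing to compare against. Your argument is the standard one, and your two side remarks --- that freeness is used exactly once to upgrade ``$q$ fixes $f(a)$'' to ``$q=e$'', and that $A$ must be nonempty for part (a) --- are both accurate and worth having on record.
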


\subsection{The groups $Q_\sigma(G)$ and $Q_{\theta}(G)$}

\subsubsection{Definition of the two groups, and canonical isomorphism}

Let $Z=Z(G)$ be the center of $G$, and let $\Gad$ be the complex reductive group~$G/Z$.
Then $Z(\R)=Z^\sigma$ is the center of $G(\R)$. We set $\G(\R)_\ad=G(\R)/Z(\R)$.
On the other hand  $\sigma$ factors to an automorphism  of $\Gad$, still denoted~$\sigma$, and we let  $\Gad(\R)=(\Gad)^\sigma$.
The projection $G\rightarrow \Gad$ restricts to a map $G^\sigma\rightarrow (G/Z)^\sigma$, with kernel $Z^\sigma$.
This induces a canonical injection $(G^\sigma)_\ad\hookrightarrow (\Gad)^\sigma$, and we define
$$
Q_\sigma(G)=(\Gad)^\sigma/(G^\sigma)_\ad=\Gad(\R)/\G(\R)_\ad.
$$
This finite group may be viewed as a group of outer automorphisms of~$G(\R)$.

A similar discussion applies to the involution $\theta$.
Then $Z^\theta$ is the center of $K=G^\theta$ and  $\theta$ factors to $\Gad$.
We define $(G^\theta)_\ad=G^\theta/Z^\theta$ and
$$
Q_\theta(G)=(\Gad)^\theta/(G^\theta)_\ad =(\Gad)^\theta/\Kad.
$$
This is a group of outer automorphisms of $K$. 

An important fact is that $Q_\sigma(G)$ and $Q_\theta(G)$ are canonically isomorphic. This is a generalization of the well known fact that
$G(\R)/G(\R)_0\simeq K/K_0$ where the subscript $0$ denotes identity component. 

\begin{lemma}\label{lem:iso_q}
\begin{itemize}
\item[(a)] 
Every element of $Q_\sigma(G)=(\Gad)^\sigma/(G^\sigma)_\ad$ has a representative in $(\Gad)^\sigma$ which is also in $(\Gad)^\theta$.
\item[(b)] Given $x \in Q_{\sigma}(G)$, and a representative $g\in (\Gad)^\sigma \cap (\Gad)^\theta$ as in~(a), the image of~$g$ in  $Q_\theta(G)$ depends only on~$x$, and not on  the choice of~$g$. The corresponding map $x \mapsto \varphi(x)$, from $Q_{\sigma}(G)$ to $Q_{\theta}(G)$, is a group isomorphism. 
\end{itemize}
\end{lemma}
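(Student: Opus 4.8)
\textbf{Proof plan for Lemma~\ref{lem:iso_q}.}

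The plan is to reduce everything to a standard fact about the pair of commuting involutions $\sigma$ and $\theta$ on the reductive group $\Gad$. First I would recall the key input: since $\sigma$ and $\theta$ commute and $\theta$ is a Cartan involution for the real form of $\Gad$ defined by $\sigma$, the fixed-point group $(\Gad)^\sigma$ (a real reductive group) retracts onto its maximal compact subgroup, which is exactly $(\Gad)^\sigma \cap (\Gad)^\theta = ((\Gad)^\sigma)^\theta$. In particular the inclusion $(\Gad)^\sigma\cap(\Gad)^\theta \hookrightarrow (\Gad)^\sigma$ induces a bijection on component groups. For part~(a), I would combine this with the fact that the image of $(G^\sigma)_\ad$ in $(\Gad)^\sigma$ is a union of identity components — indeed $(G^\sigma)_\ad$ is the image of the connected-fiber map $G^\sigma \to (\Gad)^\sigma$, so it contains the identity component $((\Gad)^\sigma)_0$, and $Q_\sigma(G) = (\Gad)^\sigma/(G^\sigma)_\ad$ is a quotient of the component group $\pi_0((\Gad)^\sigma)$. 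Every component of $(\Gad)^\sigma$ meets $(\Gad)^\sigma\cap(\Gad)^\theta$ by the retraction, so every class in $Q_\sigma(G)$ has a representative in $(\Gad)^\sigma\cap(\Gad)^\theta$; this proves~(a).

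For part~(b), well-definedness of $\varphi$ amounts to showing that if $g, g' \in (\Gad)^\sigma\cap(\Gad)^\theta$ have the same image in $Q_\sigma(G)$, then they have the same image in $Q_\theta(G)$. The hypothesis means $g^{-1}g' \in (G^\sigma)_\ad$, viewed inside $(\Gad)^\sigma$; but $g^{-1}g'$ also lies in $(\Gad)^\theta$, so it lies in $(G^\sigma)_\ad \cap (\Gad)^\theta$. I would then show this intersection is contained in $(G^\theta)_\ad = \Kad$, so that $g, g'$ have the same image in $(\Gad)^\theta/\Kad = Q_\theta(G)$. The containment $(G^\sigma)_\ad \cap (\Gad)^\theta \subset (G^\theta)_\ad$ should follow again from the retraction applied one level down: $(G^\sigma)_\ad$ is the real form $(\Gad)^\sigma$'s worth of data inside $\Gad$ coming from $G^\sigma$, its $\theta$-fixed points form the maximal compact of $G(\R)_\ad$, namely $K(\R)_\ad$, and $K(\R)_\ad$ maps into $\Kad = (G^\theta)_\ad$; more directly, $(G^\sigma)_\ad \cap (\Gad)^\theta = ((G^\sigma)^\theta)_\ad$ via the connected fibers, and $(G^\sigma)^\theta \subset G^\theta$ gives $((G^\sigma)^\theta)_\ad \subset (G^\theta)_\ad$. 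Then $\varphi$ is a homomorphism because choosing representatives in $(\Gad)^\sigma\cap(\Gad)^\theta$ is compatible with multiplication (the product of two such representatives is again in the intersection), and it descends the inclusion-induced maps on component groups.

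Finally, to see $\varphi$ is an isomorphism I would construct the inverse symmetrically: by the same retraction argument with the roles of $\sigma$ and $\theta$ interchanged (using that $(\Gad)^\theta$ is reductive and $\sigma$ restricts to an anti-holomorphic involution on it with maximal-compact-type fixed points $(\Gad)^\theta\cap(\Gad)^\sigma$), every class in $Q_\theta(G)$ has a representative in the same intersection $(\Gad)^\sigma\cap(\Gad)^\theta$, giving a map $\psi\colon Q_\theta(G)\to Q_\sigma(G)$; and $\psi\circ\varphi$, $\varphi\circ\psi$ are the identity because one can use the \emph{same} representative $g\in(\Gad)^\sigma\cap(\Gad)^\theta$ on both sides. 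I expect the main obstacle to be pinning down cleanly the statement that the inclusion $(\Gad)^\sigma\cap(\Gad)^\theta \hookrightarrow (\Gad)^\sigma$ induces a bijection on $\pi_0$ — this is the structural heart of the lemma (a Cartan-decomposition / maximal-compact argument for the possibly-disconnected real group $(\Gad)^\sigma$), and care is needed because $\Gad$, while adjoint, need not have connected fixed-point groups. Everything else is diagram-chasing with the connected-kernel maps $G^\sigma\to(\Gad)^\sigma$ and $G^\theta\to(\Gad)^\theta$.
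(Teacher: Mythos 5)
Your proposal takes a genuinely different route from the paper's. The paper proves (a) by Galois cohomology: it identifies $Q_\tau(G)$ with $\ker\bigl(H^1(\tau,Z)\to H^1(\tau,G)\bigr)$ via the connecting map, invokes the theorem that restriction to the compact form $G^{\sigma\theta}$ induces bijections $H^1(\sigma,G)\leftarrow H^1(\ast,G^{\sigma\theta})\to H^1(\theta,G)$, and chases a commutative diagram to produce a representative $kZ$ with $k\in G^{\sigma\theta}$, which is then automatically fixed by both $\sigma$ and $\theta$ in $\Gad$; for (b) it cites a proposition from the same reference for the containment $(G^\sigma)_\ad\cap(\Gad)^\theta\subset(G^\theta)_\ad$. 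You replace all of this with the global Cartan decomposition of the (possibly disconnected) real reductive group $(\Gad)^\sigma$: every component meets the maximal compact $(\Gad)^\sigma\cap(\Gad)^\theta$, and $(G^\sigma)_\ad$ is a union of components because it contains the identity component of $(\Gad)^\sigma$. This is more elementary in that it avoids the cohomology of the compact form, at the price of invoking the Cartan decomposition for disconnected real groups; the homomorphism property and the construction of the inverse by symmetry are the same in both treatments. One small correction to your part (a): the reason $(G^\sigma)_\ad\supset\bigl((\Gad)^\sigma\bigr)_0$ is not that the fibers of $G^\sigma\to(G^\sigma)_\ad$ are connected (they are cosets of $Z^\sigma=Z(\R)$, typically disconnected), but that $\g^\sigma\to(\g/\mathfrak z(\g))^\sigma$ is surjective, so the image of $G^\sigma$ is open.

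The genuine gap is at the step the paper outsources to its reference: the containment $(G^\sigma)_\ad\cap(\Gad)^\theta\subset(G^\theta)_\ad$. You assert $(G^\sigma)_\ad\cap(\Gad)^\theta=\bigl((G^\sigma)^\theta\bigr)_\ad$ ``via the connected fibers,'' but again the fibers are $Z(\R)$-cosets, and for $G=SL(2,\R)$ already $Z(\R)=\{\pm I\}$ is disconnected, so no connectedness argument applies. Concretely, an element of $(G^\sigma)_\ad\cap(\Gad)^\theta$ is $gZ$ with $g\in G(\R)$ and $w:=g^{-1}\theta(g)\in Z(\R)$, and what must be shown is that the class of $w$ in $H^1(\theta,Z)$ vanishes; this is not formal. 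The statement is true, and your own framework can be pushed to prove it: write $g=k\exp X$ with $k\in K(\R)$ and $X\in\s\cap\g(\R)$; then $w=\exp(-2X)$, so $\Ad(\exp 2X)=1$; since $\ad X$ is symmetric for the inner product $-\kappa(\cdot,\theta\cdot)$ (with $\kappa$ negative definite on $\k(\R)$), it is real-diagonalizable, so $\exp(2\,\ad X)=1$ forces $\ad X=0$, hence $X\in\mathfrak z(\g)$, $\exp X\in Z(\R)$, and $gZ=kZ\in(G^\theta)_\ad$. You need either an argument of this kind or the citation the paper uses; as written, the key step of (b) is unjustified.
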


The proof uses an interpretation of $Q_\sigma(G)$ and $Q_\theta(G)$ in terms of group cohomology, which we now spell out. 

\subsubsection{Connection with group cohomology}

The groups $Q_{\sigma}(G)$ and $Q_{\theta}(G)$ are instances of the following construction: if~$\tau$ is an involutive automorphism of~$G$, then it preserves~$Z$ and induces an involution of~$\Gad$, still denoted $\tau$; we may then set $Q_{\tau}(G)=(\Gad)^{\tau}/(G^{\tau})_{\ad}$, where $(G^{\tau})_{\ad}$ is the image of~$G^{\tau}$ in~$\Gad$. In this situation~$Q_{\tau}(G)$ has a simple interpretation in Galois cohomology, which we now recall. 

Let us first set up some general notation. When~$A$ is a group and $\tau$ is an involutive automorphism of~$A$, consider the cohomology sets $H^0(\tau, A)$ and $H^1(\tau, A)$ attached to the action of $\Z/2\Z$ on~$A$ by~$\tau$. We shall view   $H^0(\tau, A)$ as the fixed-point-set $A^{\tau}$, and $H^1(\tau,A)$ as the quotient of \mbox{$A^{-\tau} = \{ a \in A, \tau(a)=a^{-1}\}$} by the equivalence relation $a \sim x a \tau(x^{-1})$ for all $x \in A$. The set $H^1(\tau, A)$ has a distinguished point $1$, coming from the identity of~$A$; but in general it is just a pointed set, and not a group if~$A$ is nonabelian.

We now take~$A=G$, and continue to assume~$\tau$ is an involutive automorphism. Then the short exact sequence $1\rightarrow Z \rightarrow G \rightarrow \Gad\rightarrow 1$ 
gives rise to a long exact sequence of pointed sets:
\begin{equation} \label{long_ptset}
1\rightarrow Z^\tau \rightarrow G^\tau \rightarrow \Gad^{\tau} \overset{\psi_\tau}{\longrightarrow} H^1(\tau,Z)\to H^1(\tau,G)\rightarrow H^1(\tau,\Gad).
\end{equation}
In~\eqref{long_ptset} the connecting map  $ \psi_\tau\colon \Gad^{\tau} \rightarrow H^1(\tau,Z)$ is defined as follows: if $\gamma \in \Gad^{\tau}$ is the coset $gZ$, then $g\tau(g^{-1}) \in Z^{-\tau}$, and $\psi_\tau$ sends~$\gamma$ to the class of $g \tau(g^{-1})$ in $H^1(\tau, Z)$. The kernel of~$\psi_\tau$ is $(G^\tau)_{\ad}$. Since~\eqref{long_ptset} is exact we deduce that $\psi_\tau$ induces a canonical  bijection
\begin{equation} \label{interp_q_cohomology} Q_{\tau}(G) \overset{\sim}{\longrightarrow}  \ker\left(H^1(\tau,Z)\overset{\varphi_\tau}{\longrightarrow} H^1(\tau,G)\right)\end{equation}
where $\varphi_\tau$ is induced by the inclusion $Z^{-\tau} \hookrightarrow G^{-\tau}$. 



\subsubsection{Proof of Lemma~\ref{lem:iso_q}}

The proof of Lemma~\ref{lem:iso_q} uses Galois cohomology for the compact real form.  Recall $\theta\sigma = \sigma\theta$, and $G^{\sigma\theta}$ is a compact real form of~$G$. 
The involutions $\sigma$ and $\theta$ coincide on $G^{\sigma\theta}$,
therefore  $H^1(\sigma, G^{\sigma\theta})$ and  $H^1(\theta, G^{\sigma\theta})$ are the same set,
which we will denote by  $H(\ast, G^{\sigma\theta})$.
Now the inclusion $G^{\sigma\theta} \hookrightarrow G$ induces canonical maps 
\begin{equation} \label{bij_cohom} H^{1}(\sigma,  G) \longleftarrow H(\ast, G^{\sigma\theta}) \longrightarrow H^{1}(\theta,  G).\end{equation}
By \cite[Corollary~4.4 and Corollary 4.7]{galois}, both of these maps are \emph{bijections}.
Replacing~$G$ by $Z$ we get bijections $H^{1}(\sigma, Z) \leftarrow H(\ast, Z^{\sigma\theta}) \rightarrow H^{1}_{\theta}(\Gamma, Z)$.
These fit with the bijections~\eqref{bij_cohom} into a commutative diagram
\[
\begin{CD}
H^{1}(\sigma, Z) @<<< H^1(\ast, Z^{\sigma\theta})@>>> H^{1}(\theta, Z)
 \\
@V{\varphi_{\sigma}}VV @VVV @V{\varphi_{\theta}}VV  \\
H^{1}(\sigma, G) @<<< H^1(\ast, G^{\sigma\theta})@>>> H^{1}(\theta,  G)
\end{CD}
\]
where $\varphi_\sigma$, $\varphi_\theta$ are the maps in \eqref{interp_q_cohomology}.

Let us now prove part~(a) of Lemma~\ref{lem:iso_q}. Let $\gamma$ be an element of $(\Gad)^\sigma$; what we need to show is that there exists $\gamma_0 \in (G^\sigma)_{\ad}$ such that $\gamma \gamma_0^{-1}$ is $\theta$-invariant. 

Let $g$ be an element of $G$ such that $\gamma = gZ$; then $z=g \sigma(g^{-1})$ is an element of $Z^{-\sigma}$. Let $\zeta$ be its class in $H^{1}(\theta, Z)$. Let $\zeta_c$ be the inverse image of $\zeta$ under the bijection $H(\ast, Z^{\sigma\theta})\to H^{1}(\sigma, Z) $, and let $z_c\in (Z^{\sigma\theta})^{-\sigma}$ be a representative of $\zeta_c$. By definition there exists $z_0 \in Z$ such that $z = z_0 z_c \sigma(z_0^{-1})$. Since $\zeta_c$ is in the kernel of the map $H^1(\ast, Z^{\sigma\theta}) \to H^1(\ast, G^{\sigma\theta})$, there must exist  $k \in G^{\sigma\theta}$  such that $z_c = k \sigma(k^{-1})$. We see that $z = g \sigma(g)^{-1}$ is equal to $z_0 k \sigma((z_0k)^{-1})$, in other words 
\[ z_0^{-1} k^{-1} g \in G^{\sigma}.\]
Let $\gamma_0$ be the image of $z_0^{-1} k^{-1} g$ in $\Gad$. Then $\gamma_0 \in (G^{\sigma})_\ad$, and $\gamma_0$ is equal to the image of $k^{-1} g$ in $\Gad$. The element $\kappa=\gamma \gamma_0^{-1}$ of $\Gad$  is therefore $\sigma$-invariant, and since $\kappa=kZ$, we also have $(\theta\sigma)(\kappa)=\kappa$; we conclude that $\theta(\kappa) = \kappa$, q.e.d. 

Let us prove~(b). To show that when $g\in (\Gad)^\sigma \cap (\Gad)^\theta$ is a representative of $x \in Q_{\sigma}(G)$,  the image of~$g$ in  $Q_\theta(G)$ depends only on~$x$, it is enough to show that  $(G^\sigma)_\ad \cap (\Gad)^\theta \subset (G^\theta)_{\ad}$. This follows from \cite[Proposition 5.4]{galois}, applied to the action of~$G$ on~$\Gad$ and the obvious involutions. We conclude that taking $x \in Q_{\sigma}(G)$ to the image $\varphi(x)$ of $g$ in $Q_{\theta}(G)$ gives a well-defined map $\varphi\colon Q_{\sigma}(G)\to Q_{\theta}(G)$, which is clearly a group homomorphism. Exchanging the roles of $\sigma$ and $\theta$, we get a well-defined map $Q_{\theta}(G) \to Q_{\sigma}(G)$, which is the inverse of $\varphi$; therefore $\varphi$ is an isomorphism.

\qed

%

\subsection{Actions of~$Q(G)$}

\subsubsection*{On representations}

The adjoint action of $\Gad^\sigma$ on~$G(\R)$ induces an action on $G(\R)$-representations, which descends to an action of  $Q_\sigma(G)$ on equivalence class of $G(\R)$-representations. If~$q$ is an element of~$Q_\sigma(G)$, we write $\pi \mapsto \pi^q$ for the action of~$q$ on equivalence classes of $G(\R)$-representations. 

Similarly the actions of $\Gad^\theta$ on~$\g$ and~$K$ induce an action of $Q_\theta(G)$ on equivalence classes of $(\g,K)$-modules, for which we use analogous notation.

These two actions are intertwined by the isomorphism of Lemma~\ref{lem:iso_q} (this follows immediately from the definitions of the actions and of that isomorphism): 

\begin{lemma}\label{lem:action_q}
Suppose $\pi$ is the equivalence class of an irreducible Hilbert space representation of $G(\R)$. 
Suppose $q\in Q_\sigma(G)$. 
Then $(\pi^q)_K\simeq (\pi_K)^{\varphi(g)}$.
\end{lemma}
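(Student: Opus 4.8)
The plan is to unwind the definitions of both group actions and show that the isomorphism $\varphi$ of Lemma~\ref{lem:iso_q} is precisely the compatibility one needs. First I would recall that the action of $q \in Q_\sigma(G)$ on equivalence classes of $G(\R)$-representations is defined by choosing any representative $g \in (\Gad)^\sigma$ of $q$ and setting $\pi^q = \pi \circ \Ad(g)^{-1}$ (this is well-defined on equivalence classes because an inner automorphism coming from $G(\R)_\ad$ acts trivially on equivalence classes, which is exactly why one quotients by $(G^\sigma)_\ad$). Similarly, $\varphi(q)$ acts on $(\g,K)$-modules by choosing a representative $g' \in (\Gad)^\theta$ and twisting both the $\g$-action and the $K$-action by $\Ad(g')^{-1}$.

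The key point is Lemma~\ref{lem:iso_q}(a)--(b): given $q$, one can choose a \emph{single} representative $g \in (\Gad)^\sigma \cap (\Gad)^\theta$, and this same $g$ then represents $q$ in $Q_\sigma(G)$ and $\varphi(q)$ in $Q_\theta(G)$. So I would fix such a $g$ once and for all. Now the passage $\pi \mapsto \pi_K$ from a Hilbert space representation to its underlying $(\g,K)$-module is functorial: if $\alpha$ is an automorphism of $G(\R)$ that is also (the real form of) an algebraic automorphism preserving $\theta$ — here $\alpha = \Ad(g)$ with $g \in (\Gad)^\sigma \cap (\Gad)^\theta$ — then the underlying $(\g,K)$-module of $\pi \circ \alpha$ is canonically $(\pi_K)$ with its $\g$- and $K$-actions precomposed with the differential of $\alpha$ and with $\alpha|_K$ respectively. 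Since $g \in (\Gad)^\theta$ means $\Ad(g)$ preserves $K = G^\theta$ and commutes with $\theta$, the twisted module $(\pi \circ \Ad(g)^{-1})_K$ is exactly $(\pi_K)^{\varphi(q)}$ by the definition of the $Q_\theta(G)$-action. Chaining these identifications gives $(\pi^q)_K \simeq (\pi_K)^{\varphi(q)}$.

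The main obstacle — really the only substantive point — is checking that the functor $\pi \mapsto \pi_K$ genuinely commutes with twisting by $\Ad(g)$, i.e. that $(\pi \circ \Ad(g)^{-1})_K$ and $(\pi_K)$-twisted-by-$\Ad(g)^{-1}$ are the \emph{same} $(\g,K)$-module and not merely abstractly isomorphic ones. This comes down to the fact that $\Ad(g)$, being the conjugation by a fixed element $g \in (\Gad)^\sigma \cap (\Gad)^\theta$, carries $K$-finite vectors to $K$-finite vectors (because it normalizes $K$), is continuous, and intertwines the $\g$-action appropriately; so the space of $K$-finite vectors of $\pi \circ \Ad(g)^{-1}$ is literally the space of $K$-finite vectors of $\pi$ with the transported module structure. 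One must also note that the choice of $g$ within its coset modulo $(G^\sigma)_\ad = (G^\theta)_\ad$ (using Lemma~\ref{lem:iso_q}) changes both sides by an inner automorphism from $G(\R)_\ad$ resp.\ $\Kad$, hence changes nothing on equivalence classes — this is what makes the final isomorphism independent of choices. Everything else is bookkeeping, and the statement "follows immediately from the definitions", as the authors say.
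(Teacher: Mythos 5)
Your proof is correct and takes essentially the same route as the paper, which offers no argument beyond the remark that the lemma ``follows immediately from the definitions of the actions and of that isomorphism'': your unwinding --- choose a common representative $g\in(\Gad)^\sigma\cap(\Gad)^\theta$ via Lemma~\ref{lem:iso_q}, note that $\Ad(g)$ preserves $G(\R)$ and $K$ and hence commutes with passage to $K$-finite vectors --- is exactly the intended content. One small slip: $(G^\sigma)_\ad$ and $(G^\theta)_\ad$ are not equal as subgroups of $\Gad$; what you actually need when changing the representative, and what Lemma~\ref{lem:iso_q}(b) supplies, is the containment $(G^\sigma)_\ad\cap(\Gad)^\theta\subset(G^\theta)_\ad$, which guarantees both sides of the asserted isomorphism change only by inner automorphisms and so are unaffected at the level of equivalence classes.
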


Because of Lemmas~\ref{lem:iso_q} and \ref{lem:action_q}, we write $Q(G)$ for the group $Q_\sigma(G)$ or~$Q_\theta(G)$, depending on the situation.

\subsubsection*{On real nilpotent orbits} 

To define the action of~$Q_{\sigma}(G)$ on~(4), first observe that~$\Gad$ acts on~$\g$ by the adjoint action. This action preserves~$\Op$, and its restriction to~$\Gad(\R)$ preserves~$\Op\cap i\g(\R)^*$. Since elements on a given~$G(\R)_{\mathrm{ad}}$-orbit are in the same~$G(\R)$-orbit, this induces an action of $Q_{\sigma}(G)=\Gad(\R)/G(\R)_{\mathrm{ad}}$ on~$(\Op\cap i\g(\R)^*)/G(\R)$.

\begin{proposition}\label{prop:action_on_real_orbits}
\begin{enumerate} 
\item The action of~$Q_{\sigma}(G)$ on $(\Op\cap i\g(\R)^*)/G(\R)$ is simply transitive.
\item The map $\pi \mapsto \WF(\pi)$ is $Q_{\sigma}(G)$-equivariant.
\end{enumerate}
\end{proposition}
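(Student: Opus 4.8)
The plan is to prove both statements by reducing them to standard facts about principal nilpotent orbits and the structure of quasisplit real groups, together with the cohomological description of $Q_\sigma(G)$ given in~\eqref{interp_q_cohomology}. For part~(2), the equivariance is essentially formal: by definition $\WF(\pi^q)$ is the wavefront set of the representation obtained by composing $\pi$ with $\Ad(g)$ for a representative $g\in\Gad^\sigma$, and since $\WF$ is computed from the character near the identity, transport of structure gives $\WF(\pi^q)=\Ad(g)\cdot\WF(\pi)=\WF(\pi)^q$; here one uses that $\WF(\pi)$ is a union of $G(\R)$-orbits so that the $\Gad(\R)$-action on it descends to the quotient by $G(\R)$-conjugacy, exactly as in the paragraph preceding the Proposition. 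So the real content is part~(1).

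For part~(1), first I would recall that $G$ being quasisplit is equivalent to $\Op\cap i\g(\R)^*$ being nonempty, and in fact that the $G(\R)$-orbits on $\Op\cap i\g(\R)^*$ are in natural bijection with the Whittaker data for $G(\R)$; this is the classical parametrization of regular nilpotent real orbits (via Kostant, see also \cite{kostant_whittaker}), and it is implicit in the discussion of $\Wh$ in Section~\ref{sec:invariants}. Thus it suffices to show $Q_\sigma(G)$ acts simply transitively on the set of $G(\R)$-orbits in $\Op\cap i\g(\R)^*$. Transitivity: fix a base orbit $\O_0$ meeting $i\g(\R)^*$; since $\Gad$ acts transitively on $\Op$ and the stabilizer in $\Gad$ of a point of $\O_0$ is connected (the centralizer in $\Gad$ of a regular nilpotent is connected, being a unipotent group), a cohomological count shows the $\Gad(\R)$-orbits inside $\Op\cap i\g(\R)^*$ are parametrized by (the image of) $H^1(\sigma, \Cent_{\Gad}(e))$, which vanishes as $\Cent_{\Gad}(e)$ is unipotent hence has trivial Galois cohomology; so $\Gad(\R)$ acts transitively on $\Op\cap i\g(\R)^*$, and a fortiori $Q_\sigma(G)=\Gad(\R)/G(\R)_{\ad}$ acts transitively on the set of $G(\R)$-orbits. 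Freeness: if $g\in\Gad^\sigma$ sends the $G(\R)$-orbit of $e\in\Op\cap i\g(\R)^*$ to itself, then after adjusting $g$ by $G(\R)_{\ad}$ we may assume $\Ad(g)$ fixes $e$; then $g\in\Cent_{\Gad}(e)$, which is a connected unipotent group, hence $g\in(\Cent_{\Gad}(e))^\sigma=\Cent_{\Gad(\R)}(e)$ lies in $(G^\sigma)_{\ad}$ because $\Cent_G(e)\to\Cent_{\Gad}(e)$ is surjective on real points (again by vanishing of $H^1(\sigma,\cdot)$ for the unipotent kernel, which is contained in $Z$ intersected with a unipotent group, hence trivial). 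Therefore $g$ represents the identity in $Q_\sigma(G)$.

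I expect the main obstacle to be the bookkeeping in the freeness argument: one must be careful that "fixes the $G(\R)$-orbit of $e$" can genuinely be upgraded to "fixes $e$ itself" using transitivity of $G(\R)_{\ad}$ on that orbit (not just on the full $\Gad(\R)$-orbit), and that the relevant centralizers and their component groups behave as claimed — in particular that $\Cent_G(e)$ for $e$ regular nilpotent is the product of $Z$ with a connected unipotent group, so that all the ambient $H^1$'s that intervene are trivial. An alternative, perhaps cleaner, route is to bypass the nilpotent orbit picture entirely and argue through the identification of $\Op\cap i\g(\R)^*$-orbits with Whittaker data: $Q_\sigma(G)$ acts on pairs $(B,\eta)$ by its outer action on $G(\R)$, all real Borels being $G(\R)$-conjugate (quasisplit), and the set of nondegenerate characters $\eta$ of $N(\R)$ modulo $B(\R)\cap T(\R)$-conjugacy is a torsor under a group one computes to be exactly the dual of $Q_\sigma(G)$ via~\eqref{interp_q_cohomology}. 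Either way the skeleton above should suffice; I would present the nilpotent-orbit version as primary since it matches the statement of the Proposition most directly.
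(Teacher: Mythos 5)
Your argument is correct and is essentially the paper's: both parts rest on the vanishing of $H^1(\sigma,\cdot)$ for the unipotent part of the centralizer of a principal nilpotent element (the paper computes inside $G$ with $\Stab_G(X)=ZU$, you inside $\Gad$ with the connected unipotent $\Cent_{\Gad}(e)$ --- the same cohomological count), and part~(2) is the same transport-of-structure observation for wavefront sets. The preliminary detour through Whittaker data and the alternative route you sketch are unnecessary; the orbit-theoretic version you present as primary is exactly what the paper does.
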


\begin{proof}
The proof of~(1) is a standard argument in group cohomology. Suppose $\omega,\omega'$ are $G(\R)$ orbits in $\Op\cap i\g(\R)^*$,
and choose $X\in \omega,X'\in\omega'$. Then there exists $g\in G$ such that $\Ad(g)(X)=X'$. Since $X\in \g(\R)^*$ the condition $X'\in i\g(\R)^*$ 
is $g\inv \sigma(g)\in \Stab_G(X)$.  Since $X$ is principal, we have $\Stab_G(X)=ZU$ where $U$ is a unipotent group. 
A standard fact is that $H^1(\sigma, U)=1$ \cite[Chap.~III, Proposition~6]{Serre_Galois}, and from this we see that, after multiplying $g$ on the right by an element of $u$, we may assume $g\inv \sigma(g)\in Z$. This is equivalent to: the image of $g$ in $\Gad$ is in $\Gad(\R)$. On the other hand $\omega=\omega'$ 
if and only if $X'=\Ad(g)X$ for some $g\in G(\R)$. This competes the proof of (1).

Part~(2) of the proposition is comes from general properties of the wavefront set in microlocal analysis, and from the fact that the action of $Q_{\sigma}(G)$ comes from automorphisms of~$G(\R)$. More precisely, if~$q$ is an element of~$Q_{\sigma}(G)$ and $\tilde{q}$ is a representative of~$q$ in $\Gad(\R)$, then the action of~$q$ on equivalence classes of $(\g, K)$-modules comes from the action of the automorphism $\mathrm{int}(\tilde{q})$ of~$G(\R)$ on $(\g, K)$-modules. Now the wavefront set of a distribution on a manifold satisfies general covariance properties under diffeomorphisms of the manifold: this follows from Hörmander's original definitions, see e.g. \cite[Section 2, p.~800]{HarrisHeOlafsson}. Applying this to the present situation, and going through the basics as in   \cite[Section~2]{HarrisHeOlafsson}, it follows that $\pi \mapsto \WF(\pi)$ is equivariant under $Q_{\sigma}(G)$.
\end{proof}


On the other hand $Q(G)$, realized as $Q_\theta(G)$, is a group of automorphisms of $(\g,K)$.
It induces actions on $\s^*$, $\Op$ and $\Op\cap\s^*$, and on $(\g,K)$-modules.

\begin{proposition}\label{prop:action_on_K_orbits}
\begin{enumerate} 
\item The action of~$Q(G)$ on $(\Op \cap \s^*)/K$ is simply transitive.
\item The map $\pi \mapsto \AV(\pi)$ is $Q(G)$-equivariant.
\end{enumerate}
\end{proposition}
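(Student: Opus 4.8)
The plan is to mirror, on the $\theta$-side, the cohomological argument already carried out for Proposition~\ref{prop:action_on_real_orbits}. For part~(1), I would fix $X,X'\in\Op\cap\s^*$ and choose $g\in G$ with $\Ad(g)X=X'$. The condition that $X,X'$ both lie in $\s^*$ — equivalently, are $(-1)$-eigenvectors for $\theta$ acting on $\g^*$ — translates into $g^{-1}\theta(g)\in\Stab_G(X)$. Since $X$ is principal, $\Stab_G(X)=ZU$ with $U$ unipotent, and $H^1(\theta,U)=1$ by the same argument as before (Serre, \cite[Chap.~III, Prop.~6]{Serre_Galois}, applied to the order-two automorphism $\theta$; one may also invoke that a connected unipotent group over any field, or under any algebraic involution, has trivial first cohomology). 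Hence after right-multiplying $g$ by an element of $U$ we may assume $g^{-1}\theta(g)\in Z$, i.e. the image of $g$ in $\Gad$ lies in $(\Gad)^\theta$; this shows the $Q_\theta(G)$-action is transitive. For freeness, $X'=\Ad(g)X$ with $g\in K=G^\theta$ exactly when $X,X'$ are $K$-conjugate, so the stabilizer in $Q_\theta(G)$ of the orbit of $X$ is the image in $Q_\theta(G)=(\Gad)^\theta/\Kad$ of those $g\in(\Gad)^\theta$ fixing the $K$-orbit, which reduces to $\Kad$ by the same principal-$\Stab$ computation; thus the action is simply transitive.

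One subtlety I should address explicitly: the whole discussion has been phrased on $\g^*$ via $\psi_\kappa$, and I must note that passing $\Op\cap\s^*$ (rather than $\Op\cap\s$) does not change anything, since $\psi_\kappa$ is $K$-equivariant and carries $\s$ isomorphically to $\s^*$ once $\kappa$ is chosen compatibly with $\theta$ (negative definite on $\Lie K$), and the stabilizer and unipotent-group facts are intrinsic to the adjoint/coadjoint action. I would remark that the resulting simply transitive action is independent of the choice of $\kappa$, as promised in Section~\ref{s:dual}.

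For part~(2), equivariance of $\pi\mapsto\AV(\pi)$ under $Q_\theta(G)$: if $q\in Q_\theta(G)$ is represented by $\tilde q\in(\Gad)^\theta$, then the action of $q$ on the $(\g,K)$-module $\pi$ is by the automorphism $\Int(\tilde q)$ of the pair $(\g,K)$, and $\AV$ is manifestly functorial for such automorphisms — the associated variety is built from the good filtration on $\pi$ using the $\g$-action, and an automorphism of $(\g,K)$ carries $\AV(\pi)\subset\s^*$ to $\AV(\pi^q)=\Ad^*(\tilde q)\,\AV(\pi)$. Concretely I would cite \cite[Section~5]{vogan_bowdoin} for the functoriality of $\AV$ under algebra automorphisms and observe that $\Ad^*(\tilde q)$ preserves $\s^*$ because $\tilde q\in(\Gad)^\theta$ commutes with $\theta$; this is exactly the $Q_\theta(G)$-action on $(\Op\cap\s^*)/K$ defined above, so the map is equivariant.

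The main obstacle is really the bookkeeping in part~(1): verifying that the stabilizer-of-orbit subgroup of $Q_\theta(G)$ is trivial requires care because $\Stab_G(X)=ZU$ is disconnected-looking only through $Z$, and one must check that an element of $(\Gad)^\theta$ fixing the $K$-orbit of $X$ — not necessarily $X$ itself — can be adjusted by $K$ to fix $X$ on the nose before concluding it lies in $\Kad$. This is handled by the same $H^1(\theta,U)=1$ input, but the logical order (transitivity first, then compute the point-stabilizer, then the orbit-stabilizer) should be laid out cleanly. Everything else is routine, and part~(2) is immediate from functoriality of $\AV$.
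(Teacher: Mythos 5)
Your proposal is correct and follows essentially the same route as the paper: for (1) the paper simply repeats the cohomological argument of Proposition~\ref{prop:action_on_real_orbits}(1) with $\sigma$ replaced by $\theta$ (stabilizer $ZU$ of a principal nilpotent, vanishing of $H^1$ of the unipotent part), and for (2) it likewise appeals to the invariance of the defining filtrations under $(\Gad)^\theta$, i.e.\ functoriality of $\AV$ as in \cite{vogan_bowdoin}. The extra care you take with the orbit-stabilizer computation and the transfer between $\s$ and $\s^*$ via $\psi_\kappa$ is sound but not developed further in the paper.
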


The proof of~(1) is exactly the same as that  to that of Proposition~\ref{prop:action_on_real_orbits}(1), replacing~$\sigma$ by the Cartan involution $\theta$.

As for~(2), it follows directly from the definition of the associated variety. This uses filtrations of the universal enveloping algebra of~$\g$ which are all invariant under $\Gad^{\theta_{\ad}}$: see for instance the Introduction of~\cite{vogan_bowdoin}. Inspecting the definitions in \emph{loc. cit.} it becomes clear that the map $\pi \mapsto \AV(\pi)$, taking a generic discrete series $(\g,K)$-module  (or rather its equivalence class) to its associated variety, is equivariant under~$Q_{\theta}(G)$, proving~(2). 
\qed

\section{Explicit versions of the dictionary}\label{sec:explicit}

\subsection{Associated Variety of generic discrete series}
\label{s:AV}

Suppose $\pi$ is a  discrete series representation of~$G(\R)$. Let
$\lambda\in\t^*$ be a representative for the Harish-Chandra parameter of $\pi$ (Section~\ref{sec:discrete_series}). Let $\Delta$ be the set of roots of $\t$ in $\g$, 
let $\Delta^+(\lambda)=\{\alpha \in \Delta \mid \langle\lambda,\ch\alpha\rangle>0\}$,
and let $S(\lambda)\subset\Delta^+(\lambda)$ be the simple roots.
For $\alpha\in \Delta$, let $\g_\alpha$ be the corresponding root space, and choose a non-zero element $X_\alpha\in\g_\alpha$ for each $\alpha$.

In this section only we need to  choose a particular an isomorphism \mbox{$\gder\simeq \gder^*$}.
For this we let $\kappa$ be the killing form, and define $\psi_\kappa:\gder\rightarrow \gder^*$ by
$$
\psi_\kappa(X)(Y)=\kappa(X,Y)\quad (X,Y\in \gder).
$$

\begin{lemma}
\label{l:pi_to_av}
  \label{l:AV}
Suppose $\pi$ is a generic discrete series representation and $\lambda\in\t^*$ is a representative of its  Harish-Chandra parameter.
Set

\begin{equation}
  \label{e:Fpi}
  F_\pi=\sum_{\alpha\in S(\lambda)}X_{-\alpha}.
\end{equation}
Then $F_\pi\in\s$ is a regular nilpotent element, belongs to~$\gder$, and satisfies: 
$$
\AV(\pi)=K\cdot \psi_\kappa(F_\pi).
$$
\end{lemma}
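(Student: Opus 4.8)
The plan is to verify the three assertions about $F_\pi$ in turn --- that it lies in $\s$, that it is regular nilpotent in $\gder$, and that $K\cdot\psi_\kappa(F_\pi)=\AV(\pi)$ --- and then to note that the last one, combined with Proposition~\ref{invariants_ds}(b), pins down $\AV(\pi)$ exactly. First I would recall the structure theory at the relatively compact Cartan $T$: since $\lambda$ is a representative of the Harish--Chandra parameter of a \emph{generic} discrete series, the Weyl chamber containing $\lambda$ is large (Section~\ref{sec:discrete_series}), so every simple root in $S(\lambda)$ is noncompact, i.e. $\theta$ acts by $-1$ on each root space $\g_{\pm\alpha}$ with $\alpha\in S(\lambda)$. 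Hence each $X_{-\alpha}$ lies in $\s$, and therefore $F_\pi\in\s$. It lies in $\gder$ because every root space does.

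**Next** I would establish regularity. Here I would invoke the standard fact (Kostant) that in \emph{any} reductive Lie algebra, for \emph{any} choice of positive system with simple roots $\{\alpha_1,\dots,\alpha_\ell\}$ and any nonzero root vectors $X_{-\alpha_i}$, the sum $\sum_i c_i X_{-\alpha_i}$ with all $c_i\neq 0$ is a regular (principal) nilpotent element of $\gder$. Since all coefficients in \eqref{e:Fpi} equal $1$, $F_\pi$ is regular nilpotent in $\gder$; and because $G$ and $G_{\mathrm{der}}$ share a Cartan up to the central torus, $F_\pi$ is regular nilpotent in $\g$ as well, so $\psi_\kappa(F_\pi)\in\O_p$. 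Combined with $F_\pi\in\s$ and the compatibility of $\psi_\kappa$ with $\theta$ (the Killing form is $\theta$-invariant, so $\psi_\kappa(\s)=\s^*$), we get $\psi_\kappa(F_\pi)\in\O_p\cap\s^*$, so $K\cdot\psi_\kappa(F_\pi)$ is one of the $K$-orbits appearing in Proposition~\ref{invariants_ds}(b).

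**The heart of the matter** --- and the step I expect to be the main obstacle --- is identifying \emph{which} $K$-orbit it is, i.e. proving the orbit is literally $\AV(\pi)$ and not some other $K$-orbit on $\O_p\cap\s^*$. I would handle this via Proposition~\ref{invariants_ds}(b), which already tells us $\AV(\pi)$ is the closure of a single such orbit, and via the $Q(G)$-equivariance machinery of Section~2: both $\pi\mapsto\AV(\pi)$ and $\pi\mapsto K\cdot\psi_\kappa(F_\pi)$ should be $Q(G)=Q_\theta(G)$-equivariant (the latter because conjugating $\pi$ by an outer automorphism conjugates the Harish--Chandra parameter data, hence $F_\pi$, correspondingly), and $Q(G)$ acts simply transitively on the set of $K$-orbits on $\O_p\cap\s^*$ (Proposition~\ref{prop:action_on_K_orbits}(1)). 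So it suffices to match the two maps on a single generic discrete series in the packet --- equivalently, to cite the known computation of $\AV$ for one well-understood case (e.g. the large discrete series of a split group, or to quote \cite[Proposition A.9]{AV1} in the form that actually exhibits the representative). The cleanest route is probably to cite \cite[Proposition A.9]{AV1} directly for the identification of $\AV(\pi)$ with the $K$-orbit of the specific nilpotent built from $S(\lambda)$, reducing the present lemma to unwinding that statement through the isomorphism $\psi_\kappa$; the genuinely new content here is just the explicit formula \eqref{e:Fpi} and the verification that it lands in $\s$ and is regular, which the first two paragraphs dispatch.

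**Finally**, I would remark that the resulting orbit is independent of the choices made: a different representative $\lambda$ of the Harish--Chandra parameter (i.e. $W_K$-translate) changes $S(\lambda)$ by the same $W_K$-element, hence changes $F_\pi$ by an element of $\Norm_K(T)$, leaving $K\cdot\psi_\kappa(F_\pi)$ unchanged; and rescaling each $X_{-\alpha}$ rescales $F_\pi$ within its $\C^*$-orbit, which is contained in the $K$-orbit closure --- or, more carefully, one notes that a torus in $K$ acts on the $X_{-\alpha}$ and can absorb the scalars, so the $K$-orbit is genuinely well-defined.
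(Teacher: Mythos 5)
Your proposal is correct and ultimately takes the same route as the paper, whose entire proof is the citation to \cite[Propositions A.7 and A.9]{AV1}: your elementary verifications (that $F_\pi\in\s$ because a large chamber has only noncompact simple roots, and that a sum of nonzero simple root vectors is regular nilpotent) cover the content of Proposition A.7, and your ``cleanest route'' for identifying the $K$-orbit is exactly to quote Proposition A.9. The $Q(G)$-equivariance alternative you sketch is a viable variant but would still require computing $\AV$ independently for one representation in the packet, so it does not remove the need for an external input.
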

See
\cite[Propositions A.7 and A.9]{AV1}.

\begin{remarkplain}
Note that $\psi_{-\kappa}(F_\pi)=-\psi_{\kappa}(F_\pi)$, and these are not necessarily $K$-conjugate.
  This is the reason we need to use (a positive multiple of) the Killing form for this formula.
\end{remarkplain}

\subsection{Wave front set of generic discrete series}\label{sec:asymp_cone}

Suppose $\pi$ is a discrete series representation of~$G(\R)$, and let $\lambda\in\t^*$ be a representative of the Harish-Chandra parameter of $\pi$.
Define
\begin{equation} \label{semisimple_orbit_HC} \mathcal{O}_{\pi}=G(\R)\cdot\lambda.\end{equation}
This depends only on~$\pi$, and not on the choice of representative~$\lambda$. 

Consider the \emph{asymptotic cone} of $\mathcal{O}_\pi$ (see e.g. \cite[Section 3]{AVAV}) :
\[ \AC(\mathcal{O}_\pi) = \left\{ v \in \g^*  :\ \text{$\exists (\varepsilon_n, x_n)\in (\R_+ \times \mathcal{O}_\pi)^\mathbb{N}$, $\varepsilon_n \to 0$ and $\varepsilon_n x_n \to v$}  \right\}. \]
This is a union of nilpotent $G(\R)$-orbits on $i\g(\R)^*$.

\begin{lemma} \label{lem:WF_and_AC}
Let~$\pi$ be a discrete series representation of $G(\R)$. The wave-front set $\WF(\pi)$ is equal to the asymptotic cone $\AC(\mathcal{O}_\pi)$.
\end{lemma}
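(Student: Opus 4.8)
The plan is to identify $\WF(\pi)$ with a limit of rescaled coadjoint orbits and then recognize that limit as the asymptotic cone $\AC(\mathcal{O}_\pi)$. The bridge between the analytic object $\WF(\pi)$ and the geometry of the orbit $\mathcal{O}_\pi = G(\R)\cdot\lambda$ is Rossmann's character-theoretic description of the wavefront set of a discrete series (and more generally, tempered) representation. Concretely, Harish-Chandra's character formula expresses the global character $\Theta_\pi$ near the identity, after pulling back via $\exp$, as a finite sum of Fourier transforms of the invariant measures on the coadjoint orbits through the Weyl-group translates of $\lambda$; only the orbit $\mathcal{O}_\pi$ itself survives in the ``most singular'' part. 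The scaling limit $\lim_{t\to 0^+} t^{-1}\Theta_\pi(\exp tX)$, in the sense of distributions, is then (a multiple of) the Fourier transform of the canonical measure on $\AC(\mathcal{O}_\pi)$. Rossmann's theorem (and the Barbasch--Vogan identification of $\WF$ with the asymptotic support of the character) then gives exactly $\WF(\pi) = \AC(\mathcal{O}_\pi)$.

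First I would recall the precise statement we may invoke: by Rossmann and by Barbasch--Vogan, for any irreducible admissible $\pi$, the wavefront set $\WF(\pi)$ coincides with the \emph{asymptotic support} $\mathrm{AS}(\Theta_\pi)$ of its character distribution, which is the closed cone in $i\g(\R)^*$ obtained as the support of the leading term of $\Theta_\pi\circ\exp$ under dilation. Second, for a discrete series $\pi$ with Harish-Chandra parameter $\lambda$, the character near $e$ is given by Harish-Chandra's formula, and Rossmann's analysis of this formula (this is exactly the content cited as \cite{rossmann_limit_orbits}, in particular the equivalence of conditions (L1) and (L2) referenced in the proof of Proposition~\ref{invariants_ds}(c)) shows that the asymptotic support of $\Theta_\pi$ is the asymptotic cone of the single semisimple coadjoint orbit $\mathcal{O}_\pi = G(\R)\cdot\lambda$. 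Chaining these identifications yields the lemma.

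The step I expect to be the main obstacle is making the passage from ``asymptotic support of the character'' to ``asymptotic cone of $\mathcal{O}_\pi$'' clean and self-contained, because this is precisely where the nontrivial harmonic analysis lives: one must know that in Harish-Chandra's character formula the contributions of the non-dominant Weyl translates $w\lambda$ do not enlarge the asymptotic support beyond $\AC(G(\R)\cdot\lambda)$, and that the coefficient of the $\mathcal{O}_\pi$-term is genuinely nonzero. This is handled by Rossmann's limit-formula machinery — the invariant measure on $\mathcal{O}_\pi$ has a well-defined ``limit at $0$'' which is the invariant measure on $\AC(\mathcal{O}_\pi)$, and the Weyl denominator combinatorics ensure no cancellation — but spelling it out from scratch would essentially reprove Rossmann's theorem. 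Since the excerpt explicitly allows citing \cite{rossmann_limit_orbits} and the surrounding references, I would state the identification $\WF(\pi) = \mathrm{AS}(\Theta_\pi)$ with a pointer to \cite{bv_local_structure, howe_wave_front}, then invoke \cite{rossmann_limit_orbits} for $\mathrm{AS}(\Theta_\pi) = \AC(\mathcal{O}_\pi)$, and close the argument. A final routine check is that $\AC(\mathcal{O}_\pi)$ does not depend on the choice of representative $\lambda$ within its $W_K$-class — but since $G(\R)\cdot\lambda$ visibly only depends on $\pi$, as noted right after \eqref{semisimple_orbit_HC}, this is immediate.
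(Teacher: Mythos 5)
Your proposal is correct and follows essentially the same route as the paper: both reduce the lemma to Rossmann's identification of the wavefront set of a tempered (here, discrete series) representation with the asymptotic cone of the coadjoint orbit of its Harish-Chandra parameter, via the character's behavior near the identity. The paper simply cites \cite{HarrisHeOlafsson} and Theorems B and C of \cite{RossmannPicard} without reproducing the argument, whereas you sketch the underlying mechanism (wavefront set equals asymptotic support of $\Theta_\pi$, which Harish-Chandra's and Rossmann's character formulas identify with $\AC(\mathcal{O}_\pi)$); this is a faithful outline of what those references contain, so there is no substantive difference.
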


For this see \cite{HarrisHeOlafsson}; in the special case considered here, the result is due to Rossmann and can be extracted from \cite{RossmannPicard}, Theorems B and C. \qed

In Section~\ref{sec:alternate_WF} below we will give another simple
description of~$\WF(\pi)$, based on the description of $\AV(\pi)$ in
the previous paragraph and on the Sekiguchi correspondence.

\subsection{Whittaker data of generic discrete series}
\label{s:gds}

The following discussion summarizes results from \cite[Section~3]{adams_kaletha}.

Suppose $X\in i\g(\R)^*\subset \g^*$ is a regular nilpotent element (see Section \ref{s:dual}).
Let $\overline{\mathfrak b}(\R)$ be the kernel of $X$. This is the Lie algebra of~$\overline{B}(\R)$, where
$\overline{B}$ is an $\R$-Borel subgroup of~$G$.
Choose an $\R$-Cartan subgroup $T$ of
$\overline B$, and let~$B$ be the  Borel
subgroup opposite to~$\overline{B}$, characterized by $B\cap \overline B=T$.
Then $X$ defines a unitary
character of the nilradical  $N(\R)$  of $B(\R)$ by
\[ 
\psi_X(\exp Y)=e^{X(Y)}\quad (Y\in\n(\R)).
\] 
We define $\w_X$ to be the $G(\R)$-conjugacy class of $(B(\R),\psi_X)$; as the notation indicates this is independent 
of the choice of $T$.

The correspondence $\mathfrak{w}_{X} \leftrightarrow G(\R)\cdot X$ defines a bijection between Whittaker data and
$(\Op(\R)\cap i\g(\R)^*)/G(\R)$.

We recall a recent result of the first author and Tasho Kaletha, which spells out the connection between generic discrete series and Whittaker data using  {\it Kostant sections}. Let~$X$ be a regular nilpotent element of~$\g$, and choose  $H, Y \in \g$ such that $(X,H,Y)$ is an  $\SL(2)$-triple. The Kostant section~$\Kostant{X}$ is the affine subspace $X + \mathrm{Cent}_{\g}(Y)$ of~$\g$. This depends on the choice of $\SL(2)$-triple, but if $X \in \g(\R)$, then the $G(\R)$-conjugacy class of~$\Kostant{X}$ depends only on the $G(\R)$-conjugacy class of~$X$.

We define the Kostant section in $\g^*$ using an identification $\g\simeq \g^*$ as discussed in Section \ref{s:dual}. That is we fix an equivariant isomorphism $\psi$, and define
$$
\K(X)=\psi(\K(\psi\inv(X))\quad (X\in \g^*).
$$
Since $\psi$ is unique on each simple factor up to a non-zero scalar, it is easy to see this definition is independent of the choice of $\psi$. See \cite[Section 3.2]{adams_kaletha}.

\begin{proposition}[\phantom{}{\cite[Proposition 3.3.3]{adams_kaletha}}]\label{JeffTasho_criterion}
  \label{p:Kostant}
Suppose $\pi$ is a generic discrete series representation of~$G(\R)$. Suppose~$\pi$ be a Whittaker datum for~$G(\R)$, and let~$X \in i\g(\R)^*$ be a regular nilpotent element such that $\w = \w_X$. 
Then $\pi$ is $\w$-generic if and only if $\Kostant{X}$ meets $\mathcal{O}_\pi$. 
\end{proposition}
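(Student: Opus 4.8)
The statement to be proved is Proposition \ref{JeftTasho_criterion}: for a generic discrete series $\pi$ of $G(\R)$ and a Whittaker datum $\w = \w_X$ with $X \in i\g(\R)^*$ a regular nilpotent element, $\pi$ is $\w$-generic if and only if the Kostant section $\Kostant{X}$ meets the semisimple orbit $\mathcal{O}_\pi = G(\R)\cdot\lambda$, where $\lambda$ is a Harish-Chandra parameter. Since this is quoted as \cite[Proposition 3.3.3]{adams_kaletha}, I would first decide whether to simply cite it or to reprove it; assuming we want a self-contained argument, the plan is to deduce it from the equivariance machinery already set up in the paper, namely Propositions \ref{prop:action_on_real_orbits} and \ref{prop:action_on_K_orbits} together with the explicit formula of Lemma \ref{l:AV}.

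The first step is to reduce the ``if and only if'' to a counting/equivariance statement. Fix an $L$-packet $\Pi$ of discrete series; its generic members $\Pi_{\mathrm{gen}}$, the Whittaker data, and the $K$-orbits on $\Op\cap\s^*$ are all acted on by $Q(G)$, the first transitively and the others simply transitively (Propositions \ref{prop:action_on_real_orbits}, \ref{prop:action_on_K_orbits}, and part (a) of Proposition \ref{invariants_ds} which gives that $\Wh(\pi)$ is a single datum). So it suffices to produce one pair $(\pi_0, X_0)$ for which $\pi_0$ is $\w_{X_0}$-generic AND $\Kostant{X_0}$ meets $\mathcal{O}_{\pi_0}$, and then check that both conditions transform the same way under $Q(G)$. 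The ``base point'' should be chosen compatibly with Lemma \ref{l:AV}: take $\lambda$ in a large Weyl chamber, set $F_\pi = \sum_{\alpha\in S(\lambda)} X_{-\alpha}$, and let $X_0 = \psi_\kappa(F_\pi) \in \s^* \subset \g^*$. One then has $\AV(\pi) = K\cdot X_0$ by Lemma \ref{l:AV}; one needs to know $X_0$ (or a $G(\R)$-conjugate) lies in $i\g(\R)^*$ and that the Whittaker datum $\w_{X_0}$ attached to it via the recipe of Section \ref{s:gds} is exactly $\Wh(\pi)$ — this is essentially the content of the ``dictionary'' and should follow from Lemma \ref{l:AV} combined with the Kostant--Sekiguchi compatibility promised in Section \ref{sec:Sekiguchi}.

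The second and main step is the geometric heart: showing that for this base point, $\Kostant{X_0}$ meets $G(\R)\cdot\lambda$. Here the key classical input is that the Kostant section $X + \Cent_\g(Y)$ (for an $\SL(2)$-triple $(X,H,Y)$) is a slice meeting \emph{every} regular adjoint $G$-orbit in exactly one point; in particular it meets the regular semisimple complex orbit $G\cdot\lambda$ in a single point $\mu$. The task is to show one can arrange the real structure so that $\mu$ actually lies in the \emph{real} orbit $G(\R)\cdot\lambda$ rather than merely the complex one. The natural approach is to use the Cartan/compact-form cohomology results invoked in Section on $Q$ (Lemma \ref{lem:iso_q} and \cite{galois}): the obstruction to moving from the complex orbit to the real orbit lives in a cohomology set $H^1(\sigma, \Stab_G(\lambda))$, and for regular semisimple $\lambda$ this stabilizer is a torus, whose cohomology one controls. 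I would set $X$, $H$, $Y$ so that $H$ is compatible with $\lambda$'s chamber (e.g.\ $H$ is the coweight with $\langle\alpha,H\rangle = 2$ for $\alpha\in S(\lambda)$), making the $\theta$- and $\sigma$-structures interact correctly with the grading, and then trace through the identification of the Kostant section in $\g^*$ with the one in $\g$ (which the paper has already set up independently of $\psi$).

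The step I expect to be the main obstacle is precisely this rationality/reality argument — verifying that the (unique, complex) intersection point $\Kostant{X}\cap(G\cdot\lambda)$ can be taken in $G(\R)\cdot\lambda$, which requires genuinely understanding how the real form $\sigma$ (equivalently the Cartan involution $\theta$, via Lemma \ref{lem:iso_q}) interacts with the Kostant slice and with the large chamber. Everything else — the transitivity bookkeeping via $Q(G)$, the identification of $\AV(\pi)$ with $K\cdot\psi_\kappa(F_\pi)$, the dictionary between regular nilpotent $G(\R)$-orbits and Whittaker data — is either already in the excerpt or is a formal consequence of it. Given the delicacy of that rationality point and the fact that the statement is attributed verbatim to \cite{adams_kaletha}, the realistic ``proof'' here is to cite \cite[Proposition 3.3.3]{adams_kaletha} and, if desired, remark that it also follows from Lemma \ref{l:AV}, Proposition \ref{prop:action_on_real_orbits}, Proposition \ref{prop:action_on_K_orbits}, and the properties of the Kostant section recalled above, together with the compact-form cohomology bijections of \cite{galois}.
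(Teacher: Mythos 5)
Your bottom line --- that the realistic proof is to cite \cite[Proposition 3.3.3]{adams_kaletha} --- is exactly what the paper does: no independent proof of this statement is given there, and the attribution in the proposition header is the whole of the paper's justification. So on that reading your proposal matches the paper.

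However, the self-contained argument you sketch has a genuine circularity within the logical structure of this paper, and its key step is not actually carried out. First, to anchor the base point you propose to use ``the Kostant--Sekiguchi compatibility promised in Section~\ref{sec:Sekiguchi}''; but the paper's elementary proof of that compatibility (Proposition~\ref{Sekiguchi_result}, Section~\ref{sec:sekiguchi_proof}) explicitly takes Proposition~\ref{JeffTasho_criterion} as its main ingredient, so you cannot use it here without either circularity or falling back on the deep Schmid--Vilonen theorem that the paper is at pains to avoid. Second, the step you yourself flag as the main obstacle --- showing that the unique intersection point of $\Kostant{X}$ with the complex orbit $G\cdot\lambda$ can be taken in the \emph{real} orbit $G(\R)\cdot\lambda$, i.e.\ controlling the class in $H^1(\sigma,\Cent_G(\lambda))$ --- is precisely the content of the cited result, and you do not resolve it; gesturing at the cohomology of a torus is not enough, since $H^1(\sigma,T)$ is generally nontrivial and the point is to show the relevant class vanishes for the correct pairing of chamber and nilpotent orbit. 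The $Q(G)$-equivariance reduction in your first step is sound as bookkeeping (both conditions are graphs of equivariant maps from a transitive set to a simply transitive one), but without an honest base-point verification it proves nothing. So either cite the reference outright, or be prepared to reproduce the rationality argument of \cite{adams_kaletha} rather than the Sekiguchi detour.
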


\begin{lemma}[\phantom{}{\cite[Lemma 3.3.4]{adams_kaletha}}]
\label{lem:Kost_and_AC}
Let $\mathcal{O} \subset i\g(\R)^\ast$ be a regular semisimple orbit and  let~$X \in \g^\ast$ be a regular nilpotent element. Then $\Kostant{X}$ meets~$\mathcal{O}$ if and only if $X$ is in the asymptotic cone~$\AC(\mathcal{O})$. 
\end{lemma}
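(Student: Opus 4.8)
The plan is to prove the two implications separately, treating the Kostant section $\Kostant{X}$ and the asymptotic cone $\AC(\O)$ as the two sides of a degeneration/limit argument. Throughout I fix an $\SL(2)$-triple $(X,H,Y)$ in $\g$ with $X$ regular nilpotent, and I use the grading of $\g$ by $\ad(H)$-eigenvalues; the Kostant section is $\Kostant{X}=X+\Cent_{\g}(Y)$, and since $X$ is regular, $\Cent_{\g}(Y)$ lies in the sum of the non-positive $\ad(H)$-eigenspaces and is transverse to the orbit through $X$. The one-parameter subgroup $t\mapsto \exp(\log(t)\,H)=:h(t)$ acts on the grading, and the key computational fact is that conjugation by $h(t)$ scales $\Kostant{X}$ in a controlled way: an element $X+v$ with $v\in\Cent_{\g}(Y)$, written $v=\sum_{j\le 0}v_j$ in eigencomponents, is sent by $\Ad(h(t))$ to $t^{2}X+\sum_{j\le 0} t^{j}v_j$, so after rescaling by $t^{-2}$ we get $X + \sum_{j\le 0} t^{j-2} v_j \to X$ as $t\to\infty$. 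Dually, transporting everything to $\g^*$ via any equivariant isomorphism $\psi$ (legitimate by the remarks in Section~\ref{s:dual}, since the construction is independent of $\psi$), the same scaling holds on $\g^*$.

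For the direction ``$\Kostant{X}$ meets $\O$ $\Rightarrow$ $X\in\AC(\O)$'': suppose $Z\in\Kostant{X}\cap\O$. Apply the real cocharacter $h(t)$, $t\in\R_{>0}$; since $\O$ is a $G(\R)$-orbit and $h(t)$ can be taken inside $G(\R)$ (the triple can be chosen defined over $\R$ when $X\in\g(\R)^*$; in general one argues with the complexification and then uses that $\AC$ of a real orbit is a closed $G(\R)$-stable cone, or one simply observes $X\in i\g(\R)^*$ lies in $\AC(\O)$ iff its $G(\R)$-orbit does), the points $\Ad(h(t))Z$ stay in $\O$. By the scaling computation, $t^{-2}\Ad(h(t))Z\to X$ as $t\to\infty$. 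Setting $\varepsilon_n = t_n^{-2}\to 0$ and $x_n=\Ad(h(t_n))Z\in\O$ with $t_n\to\infty$, we get $\varepsilon_n x_n\to X$, i.e. $X\in\AC(\O)$ by the very definition of the asymptotic cone.

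For the converse ``$X\in\AC(\O)$ $\Rightarrow$ $\Kostant{X}$ meets $\O$'': suppose $\varepsilon_n\to 0$ and $x_n\in\O$ with $\varepsilon_n x_n\to X$. The idea is that for large $n$ the rescaled points $\varepsilon_n x_n$ lie in a neighbourhood of $X$ on which the Kostant section provides a slice transverse to $G$-orbits (Kostant's theorem: the adjoint action map $G\times\Kostant{X}\to\g$ is smooth with open image and $\Kostant{X}$ meets every regular orbit exactly once, and near a regular point the orbit map is a submersion). So there exist $g_n\in G$ with $\Ad(g_n)(\varepsilon_n x_n)\in\Kostant{X}$; equivalently $\varepsilon_n x_n$ lies on the $G$-orbit of a point $s_n\in\Kostant{X}$ with $s_n\to X$. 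Now $x_n\in\O$ is regular semisimple, hence so is $\varepsilon_n^{-1}s_n$ (for $n$ large), and it lies in $\Kostant{X}$ and has the same $G$-orbit — indeed the same invariants under $\C[\g]^G$ — as $x_n$; since regular semisimple elements with equal invariants are $G$-conjugate, $\varepsilon_n^{-1}s_n\in G\cdot x_n$. We must promote $G$-conjugacy to $G(\R)$-conjugacy: because $x_n\in\O\subset i\g(\R)^*$ and $\O$ is a single $G(\R)$-orbit inside its $G$-orbit, and because both $x_n$ and $\varepsilon_n^{-1}s_n$ are fixed (up to the real structure) by the same real form considerations — more precisely, one uses that the Kostant section can be taken defined over $\R$ and Proposition~\ref{JeffTasho_criterion}/the setup of \cite[Section~3]{adams_kaletha} already packages exactly this real rationality — one concludes $\varepsilon_n^{-1}s_n\in\O$, so $\Kostant{X}\cap\O\neq\varnothing$.

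The main obstacle I expect is the real-form bookkeeping in the converse: passing from the complex statement (that $\Kostant{X}$, being a transverse slice to regular $G$-orbits, meets the $G$-orbit of any point it approximates) to the genuinely real statement that it meets the $G(\R)$-orbit $\O$. This requires either invoking that the $\SL(2)$-triple and hence the section are chosen over $\R$ (so that $\Kostant{X}(\R)$ is a slice for $G(\R)$-orbits on the relevant real stratum), together with a Galois-cohomology vanishing — e.g. $H^1$ of a unipotent group or of the relevant regular centralizer is trivial, as in the proof of Proposition~\ref{prop:action_on_real_orbits}(1) — or citing the corresponding assertion directly from \cite[Section~3.2--3.3]{adams_kaletha}. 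Everything else is the elementary $\ad(H)$-scaling argument sketched above, transported to $\g^*$ by $\psi_\kappa$.
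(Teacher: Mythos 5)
First, note that the paper does not actually prove this lemma: it is imported verbatim from \cite[Lemma 3.3.4]{adams_kaletha} (with a pointer to \cite[Proposition 3.5]{fm}), so your attempt is competing with the cited proof rather than with one in the text. Your forward implication is essentially correct and standard: writing $Z=X+v\in\Kostant{X}\cap\mathcal{O}$ with $v=\sum_{j\le 0}v_j$ graded by $\ad(H)$, the twisted scaling $t^{-2}\Ad(h(t))Z=X+\sum_{j\le 0}t^{j-2}v_j$ tends to $X$ as $t\to\infty$; once the triple is normalized so that $H\in\g(\R)$ and $Y\in i\g(\R)^*$ (the relevant real form here is $i\g(\R)^*$, not $\g(\R)^*$ as you wrote), the elements $\Ad(h(t))Z$ stay in the $G(\R)$-orbit $\mathcal{O}$ and $X\in\AC(\mathcal{O})$ by definition.

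The converse as written has two genuine gaps. First, $\varepsilon_n^{-1}s_n$ does \emph{not} lie in $\Kostant{X}$: the Kostant section is the affine subspace $X+\Cent_{\g}(Y)$, which is not stable under scaling, since $\varepsilon_n^{-1}s_n=\varepsilon_n^{-1}X+\varepsilon_n^{-1}(s_n-X)$ and $\varepsilon_n^{-1}X\neq X$. This is repairable: conjugating by $h(\varepsilon_n^{1/2})$ sends $\varepsilon_n^{-1}s_n$ to $X+\sum_{j\le 0}\varepsilon_n^{j/2-1}(s_n-X)_j$, which \emph{is} in $\Kostant{X}$ --- the section is stable under the twisted $\mathbb{G}_m$-action, not the naive one. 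Second, and more seriously, the descent from $G$-conjugacy to $G(\R)$-conjugacy is asserted but not proved, and the suggested cohomological fix fails: the stabilizer of a regular \emph{semisimple} element is a maximal torus, not a unipotent group, and $H^1(\sigma,T)$ is in general nontrivial --- this is precisely why a single complex regular semisimple orbit splits into several $G(\R)$-orbits, so ``same invariants under $\C[\g]^G$'' cannot place $\varepsilon_n^{-1}s_n$ in $\mathcal{O}$ rather than in some other real form of the same complex orbit. The clean fix is never to leave the real points: with the triple normalized as above, the action map $G(\R)\times\bigl(\Kostant{X}\cap i\g(\R)^*\bigr)\to i\g(\R)^*$ is submersive at $(1,X)$ by transversality of the section to the orbit through $X$, so its image contains a neighborhood of $X$ in $i\g(\R)^*$; for $n$ large, $\varepsilon_n x_n$ lies in that image, yielding $s_n\in\Kostant{X}\cap i\g(\R)^*$ that is $G(\R)$-conjugate to $\varepsilon_n x_n$, whence $\varepsilon_n^{-1}s_n\in\mathcal{O}$ is brought back into $\Kostant{X}$ by $h(\varepsilon_n^{1/2})\in G(\R)$. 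With these two repairs your outline becomes a complete proof.
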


For a closely related result see \cite[Proposition 3.5]{fm}.

The two statements above, combined with Lemma \ref{lem:WF_and_AC}, give the following result,
which is a special case of the main result of \cite{matumoto} (Theorem A):
  
\begin{corollary}\label{cor:matumoto}
Suppose $\pi$ is a generic discrete series representation. Then
$$
\Wh(\pi)=\w_X\Leftrightarrow \WF(\pi)=G(\R)\cdot X.
$$
\end{corollary}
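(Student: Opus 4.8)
\textbf{Proof plan for Corollary~\ref{cor:matumoto}.}
The plan is to simply chain together the three results immediately preceding the statement. First I would unwind the definitions: given a generic discrete series $\pi$, let $\O_\pi = G(\R)\cdot\lambda$ be the regular semisimple orbit attached to its Harish-Chandra parameter as in \eqref{semisimple_orbit_HC}, and recall that by the bijection $\w_X \leftrightarrow G(\R)\cdot X$ every Whittaker datum is of the form $\w_X$ for some regular nilpotent $X\in i\g(\R)^*$, with $G(\R)\cdot X$ uniquely determined by $\w_X$. So it suffices to prove, for such an $X$, the equivalence $\Wh(\pi)=\w_X \Leftrightarrow \WF(\pi)=G(\R)\cdot X$; note that by Proposition~\ref{invariants_ds}(a) and (c) both sides are genuine assertions about single orbits, so there is no multiplicity subtlety to worry about.

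Next I would string the logical equivalences. By Proposition~\ref{JeffTasho_criterion}, $\pi$ is $\w_X$-generic (i.e. $\Wh(\pi)=\w_X$) if and only if the Kostant section $\Kostant{X}$ meets $\O_\pi$. By Lemma~\ref{lem:Kost_and_AC}, applied with $\O=\O_\pi$, the section $\Kostant{X}$ meets $\O_\pi$ if and only if $X\in\AC(\O_\pi)$. Finally, by Lemma~\ref{lem:WF_and_AC}, $\AC(\O_\pi)=\WF(\pi)$, so $X\in\AC(\O_\pi)$ is equivalent to $X\in\WF(\pi)$. Since $\WF(\pi)$ is the closure of a single $G(\R)$-orbit on $\O_p\cap i\g(\R)^*$ (Proposition~\ref{invariants_ds}(c)) and $X$ is regular nilpotent, $X\in\WF(\pi)$ forces that open orbit to be exactly $G(\R)\cdot X$, i.e. $\WF(\pi)=G(\R)\cdot X$ in the abusive notation fixed after Proposition~\ref{invariants_ds}. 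Reading the chain backwards gives the converse, completing the equivalence.

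There is essentially no obstacle here: the corollary is a formal consequence of results already cited, and the only point requiring a little care is the bookkeeping between ``$X$ lies in the asymptotic cone / wavefront set'' and ``$G(\R)\cdot X$ is \emph{the} orbit whose closure is $\WF(\pi)$.'' This is where one uses that $X$ is regular, so that it cannot lie in the boundary of the principal-orbit closure inside $\s^*$ — equivalently, $G(\R)\cdot X$ is open in $\O_p\cap i\g(\R)^*$, hence dense in any $G(\R)$-stable closed set containing it. With that observation the statement ``$X\in\WF(\pi)$'' and ``$\WF(\pi)=G(\R)\cdot X$'' are interchangeable, and the corollary follows.
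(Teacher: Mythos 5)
Your proposal is correct and is exactly the argument the paper intends: the corollary is obtained by chaining Proposition~\ref{JeffTasho_criterion}, Lemma~\ref{lem:Kost_and_AC}, and Lemma~\ref{lem:WF_and_AC}, and your extra remark about $X$ being regular (so that $X\in\WF(\pi)$ is equivalent to $\WF(\pi)=G(\R)\cdot X$ in the paper's abusive notation) is a correct and welcome piece of bookkeeping that the paper leaves implicit.
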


\subsection{Geometry of Kostant sections and asymptotic cones}

By Lemma \ref{l:pi_to_av} the associated variety of a discrete representation only depends on the $W_K$-orbit of any Weyl chamber containing a representative of its Harish-Chandra parameter. 
The same holds for the wave front set and Whittaker data by Theorem \ref{th:main}. This can also be proved directly 
using some geometric facts.

If~$H$ is an element of~$\g$, write $\O_H$ for the $G(\R)$-orbit of $H$.

\begin{proposition}\label{prop:AC_chamber} Let $\mathcal{C} \subset \t$ be a large open Weyl chamber.
  If $H,H'\in\mathcal{C}$  then $\AC(\mathcal{O}_H)=\AC(\mathcal{O}_{H'})$.

  Suppose $X$ is a regular nilpotent element. Then $\K(X)\cap \O_H\ne \emptyset$ if and only if $\K(X)\cap \O_{H'}\ne\emptyset$.
\end{proposition}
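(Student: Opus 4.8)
The plan is to deduce both assertions from the fact that a large Weyl chamber $\mathcal{C}$ is a single orbit of the subgroup $W_K \subset W$, rather than trying to move $H$ to $H'$ directly inside $\mathcal{C}$. More precisely, recall from Section~\ref{sec:discrete_series} that the discrete series representations $\pi_{\lambda,\zeta}$ attached to $\lambda \in \mathcal{C} \cap \Lambda$ (for a fixed central character) all coincide up to the $W_K$-action, and that $W_K = \mathrm{Norm}_K(T)/T$ is realized by elements of $K \subset G(\R)$. So if $H, H' \in \mathcal{C}$, the point is that $W \cdot H$ meets $\mathcal{C}$ in a single point only if we quotient by the stabilizer, but the genuinely useful statement is: for $H, H' \in \mathcal{C}$, the orbits $\mathcal{O}_H$ and $\mathcal{O}_{H'}$ need not be equal, yet their asymptotic cones are. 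First I would reduce to the case where $H, H'$ lie in $\mathcal{C} \cap \t(\R)$ and scale; the asymptotic cone $\AC(\mathcal{O}_H)$ is invariant under positive scaling of $H$ by definition, so it depends only on the ray $\R_{>0} H$, i.e. on the image of $H$ in the open cell of $\mathcal{C}$ projectivized.

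The key step is then to show $\AC(\mathcal{O}_H)$ is \emph{locally constant} on the large chamber $\mathcal{C}$, hence constant since $\mathcal{C}$ is connected. For this I would invoke the equivalence of conditions (L1) and (L2) from \cite{rossmann_limit_orbits} already used in the proof of Proposition~\ref{invariants_ds}(c): for a regular semisimple $H$, the orbit $\mathcal{O}_H$ has an asymptotic cone which is the closure of the orbit $\lim_{x\to 0(\mathcal{C})}[G(\R)\cdot x]$, and by Rossmann's Lemma~B this limit is a single $G(\R)$-orbit depending only on the chamber $\mathcal{C}$, not on the ray along which one approaches $0$ inside $\mathcal{C}$. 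So $\AC(\mathcal{O}_H) = \overline{\lim_{x\to 0(\mathcal{C})}[G(\R)\cdot x]}$ for every $H \in \mathcal{C}$, which gives the first assertion immediately. Alternatively, and more in the spirit of the section heading, one can argue that the map $H \mapsto \AC(\mathcal{O}_H)$ is constant on $W_K$-orbits (because $W_K$ acts by inner automorphisms from $K \subset G(\R)$, which preserve $G(\R)$-orbits and hence their asymptotic cones), and a large chamber is a single $W_K$-orbit among chambers; but since $H, H'$ are in the \emph{same} chamber this is not quite enough, so the Rossmann route is the clean one.

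For the second assertion I would argue similarly, using Lemma~\ref{lem:Kost_and_AC}: for a regular nilpotent $X \in \g^*$ and a regular semisimple orbit $\mathcal{O}$, one has $\K(X) \cap \mathcal{O} \ne \emptyset$ if and only if $X \in \AC(\mathcal{O})$. Transferring to $\g$ via the fixed identification $\psi$ (harmless since $\K$ is defined independently of $\psi$, as noted in Section~\ref{s:gds}), we get that $\K(X) \cap \mathcal{O}_H \ne \emptyset$ iff $\psi(X) \in \AC(\mathcal{O}_H)$. By the first part of the proposition, $\AC(\mathcal{O}_H) = \AC(\mathcal{O}_{H'})$ when $H, H' \in \mathcal{C}$, so $\K(X) \cap \mathcal{O}_H \ne \emptyset$ iff $\K(X)\cap \mathcal{O}_{H'} \ne \emptyset$. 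This is essentially formal once the first part is in hand.

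\textbf{Main obstacle.} The delicate point is the first assertion: establishing that the asymptotic cone is genuinely constant — not merely constant up to the finite ambiguities that the representation theory allows — across a large chamber. The subtlety is that two regular semisimple elements of the \emph{same} Weyl chamber for $(\g,\t)$ can lie in different $G(\R)$-orbits (the real orbits in a Cartan subalgebra are cut out by finer data than the complex Weyl chamber), so one cannot simply say $\mathcal{O}_H = \mathcal{O}_{H'}$. One must genuinely use the limit-orbit characterization of \cite{rossmann_limit_orbits}, specifically that the limiting orbit depends only on the open chamber and not on the semisimple orbit being scaled down, which is exactly the content of the (L1) $\Leftrightarrow$ (L2) equivalence together with Lemma~B there. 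I expect this citation-assembly to be the crux; the rest is bookkeeping with the transfer $\psi$ and Lemma~\ref{lem:Kost_and_AC}.
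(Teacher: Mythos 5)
Your argument is correct in substance, but it follows a genuinely different route from the paper's. The paper does \emph{not} go back to Rossmann's limit-orbit theorem here; instead it first proves an auxiliary openness statement (Lemma~\ref{lem:AC_containment}): for a fixed regular nilpotent orbit $\Omega$, the union $\mathcal{E}_\Omega$ of regular semisimple orbits $\mathcal{O}$ with $\Omega\subset\AC(\mathcal{O})$ is open, the point being that by Lemma~\ref{lem:Kost_and_AC} membership in $\mathcal{E}_\Omega$ is detected by the Kostant section $\Kostant{F}$, which is transverse to the orbits. It then runs a connectedness argument on $\mathcal{C}$: the sets $\ECom$ are open, and two distinct ones cannot overlap because the Harish-Chandra parameters of generic discrete series are dense in directions in $\mathcal{S}\cap\mathcal{C}$ and the wavefront set of a generic discrete series is the closure of a \emph{single} orbit (Proposition~\ref{invariants_ds}(c)); so $\mathcal{C}$ is a single $\ECom$. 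Your proof instead identifies $\AC(\mathcal{O}_H)$ outright with $\overline{\lim_{x\to 0(\mathcal{C})}[G(\R)\cdot x]}$, which manifestly depends only on $\mathcal{C}$, and this is shorter and arguably cleaner. One small point you should make explicit: the paper only ever extracts from Rossmann's (L1)$\Leftrightarrow$(L2) the containment $\AC(\mathcal{O}_H)\subset\overline{\lim_{x\to 0(\mathcal{C})}[G(\R)\cdot x]}$, so for your claimed \emph{equality} you must also supply the reverse containment; this is routine (if $y$ lies in the limit set, then for any open $V\ni y$ the saturation $G(\R)\cdot V$ contains $\varepsilon H$ for all small $\varepsilon$, hence $V$ meets $\varepsilon\mathcal{O}_H$, so $y\in\AC(\mathcal{O}_H)$, which is closed), but it is not in the paper and should be written down. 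Your treatment of the second assertion is identical to the paper's (a formal consequence of the first assertion and Lemma~\ref{lem:Kost_and_AC}). What each approach buys: yours avoids the density-of-parameters and connectedness bookkeeping and in fact would prove the first assertion for an arbitrary (not necessarily large) chamber; the paper's stays within the geometry of Kostant sections that the section is advertising and only uses Rossmann through the already-established single-orbit property of $\WF$ for generic discrete series. Your opening detour through $W_K$ is, as you yourself note, not needed, and the parenthetical claim that elements of the same chamber lie in different $G(\R)$-orbits for reasons of real forms is off the mark (they lie in different orbits simply because a regular semisimple orbit meets $\t$ in finitely many points), but neither affects the argument.
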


Before giving the proof we need a Lemma.

\begin{lemma}\label{lem:AC_containment} Let~$\Omega$ be a regular nilpotent $G(\R)$-orbit, and let $\mathcal{E}_\Omega \subset \g$ be the union of all regular semisimple orbits~$\mathcal{O}$ such that $\AC(\mathcal{O})$ contains~$\Omega$. Then $\mathcal{E}_\Omega$ is open. 
\end{lemma}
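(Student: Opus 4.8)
\textbf{Proof proposal for Lemma~\ref{lem:AC_containment}.}

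The plan is to show that membership in $\mathcal{E}_\Omega$ is an open condition by rephrasing it in terms of Kostant sections, for which openness is geometrically transparent. First I would use Lemma~\ref{lem:Kost_and_AC}: for a regular semisimple orbit $\mathcal{O}$ and a regular nilpotent element $X$, the condition $\Omega \subset \AC(\mathcal{O})$ is equivalent to $X \in \AC(\mathcal{O})$ for $X \in \Omega$, which in turn is equivalent to $\Kostant{X} \cap \mathcal{O} \neq \emptyset$. Fix once and for all a regular nilpotent $X_0 \in \Omega$ and a Kostant section $\Kostant{X_0} = X_0 + \Cent_\g(Y_0)$ attached to an $\SL(2)$-triple $(X_0, H_0, Y_0)$. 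Then $\mathcal{E}_\Omega = \{ H \in \g : H \text{ regular semisimple}, \ G(\R)\cdot H \text{ meets } \Kostant{X_0} \}$, since $\Omega \subset \AC(G(\R)\cdot H)$ iff $X_0 \in \AC(G(\R)\cdot H)$ iff $\Kostant{X_0}$ meets $G(\R)\cdot H$ (using the same fixed representative $X_0$). So it suffices to prove this latter set is open in $\g$.

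Next I would realize $\mathcal{E}_\Omega$ as the image of an open set under a submersion. Consider the map $\mu \colon G(\R) \times \Kostant{X_0} \to \g$, $(g, s) \mapsto \Ad(g)s$. Its image is exactly the union of the $G(\R)$-orbits meeting $\Kostant{X_0}$, and $\mathcal{E}_\Omega$ is the intersection of this image with the (open) set $\g^{\mathrm{rss}}$ of regular semisimple elements. The key point is that $\mu$ is a submersion at every point $(g,s)$ with $s$ regular semisimple: the differential of $\mu$ at $(e, s)$ has image $[\g(\R), s] + \Cent_\g(Y_0)$, and since $s$ is regular semisimple $[\g, s] = [\g(\R), s] \otimes \C$ is a complement to $\Cent_\g(s)$, a Cartan subalgebra; combined with the fact that $\Kostant{X_0}$ is transverse to all regular orbits (the classical Kostant transversality theorem, which also underlies Lemma~\ref{lem:Kost_and_AC}), one gets that $d\mu_{(e,s)}$ is surjective onto $\g$. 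By $G(\R)$-equivariance the same holds at $(g,s)$ for all $g$. Hence $\mu$ is an open map on the locus where $s$ is regular semisimple, so $\mu\bigl(G(\R) \times (\Kostant{X_0} \cap \g^{\mathrm{rss}})\bigr)$ is open in $\g$; intersecting with $\g^{\mathrm{rss}}$ (or noting the image already lies there once we restrict) gives that $\mathcal{E}_\Omega$ is open.

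The main obstacle I anticipate is making the transversality/submersion claim precise over $\R$: I need that $\Kostant{X_0}$, although defined via a complex $\SL(2)$-triple, is transverse to real orbits $G(\R)\cdot s$ at real regular semisimple points $s$. The clean way around this is to argue dimension count complexly — $\dim_\C [\g, s] + \dim_\C \Cent_\g(Y_0) = (\dim \g - \mathrm{rank}\, \g) + \mathrm{rank}\, \g = \dim \g$, and Kostant's theorem that $X_0 + \Cent_\g(Y_0)$ meets every regular orbit transversally gives that the sum is direct — and then observe that $[\g(\R),s]$ and $[\g,s]$ have the same complex span when $s \in \g(\R)$ because $s$ generates its own (real) centralizer together with root vectors that come in conjugate pairs. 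An alternative, perhaps cleaner, route avoiding differential geometry entirely: show directly that if $H \in \mathcal{E}_\Omega$ then a whole neighborhood of $H$ lies in $\mathcal{E}_\Omega$ by using Lemma~\ref{lem:Kost_and_AC} together with the fact, implicit in the theory of the asymptotic cone, that $\AC(G(\R)\cdot H)$ varies upper-semicontinuously — but I expect the submersion argument to be the most self-contained, matching the paper's stated goal of elementary proofs.
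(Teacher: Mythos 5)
Your proposal is correct and follows essentially the same route as the paper: both translate the condition $\Omega \subset \AC(\mathcal{O})$ into ``$\mathcal{O}$ meets a fixed Kostant section'' via Lemma~\ref{lem:Kost_and_AC}, and then invoke transversality of the Kostant section to the regular orbits to conclude that the set of such orbits sweeps out an open set. The paper phrases the transversality step pointwise (a neighborhood of a point of $\mathcal{O}\cap\Kostant{F}$ consists of orbits meeting $\Kostant{F}$) rather than as a submersion $G(\R)\times\Kostant{X_0}\to\g$, but the content is the same.
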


\begin{proof}

Let $F$ be an element of~$\Omega$, and let $\mathcal{O}$  be a regular semisimple orbit such that $\AC(\mathcal{O})$ contains~$\Omega$.
Then $\Kostant{F}$ meets $\mathcal{O}$ by  Lemma~\ref{lem:Kost_and_AC}. 
Fix $X$ in $\mathcal{O} \cap \Kostant{F}$.
We have $X =F+Y$ where $Y$ belongs to the centralizer $\mathrm{Cent}_\g(F)$.
Since this centralizer is transverse to the $G(\R)$-orbits and has dimension $\dim(\g(\R))-\dim(\t(\R))$,
there exists an open neighborhood of~$X$ consisting of (regular semisimple) orbits  which meet $\Kostant{F}$.
Using Lemma~\ref{lem:Kost_and_AC} again, we see that all those orbits $\mathcal{O}'$ satisfy $F \in \AC(\mathcal{O}')$,
which implies that $\Omega = G(\R) \cdot F$ is contained in $\AC(\mathcal{O}')$.
This proves that $\mathcal{E}_\Omega$ contains an open neighborhood of any of its points, hence is open.   \end{proof}

\begin{proof}[Proof of the  Proposition]
For any regular nilpotent orbit~$\Omega$ let $\ECom$  denote the set of $H \in \mathcal{C}$ such that $\AC(\mathcal{O}_H)$ contains $\Omega$; by Lemma~\ref{lem:AC_containment}  we know that   $\ECom$ is an open subset of~$\mathcal{C}$. 

Now let $\mathcal{S}$ be the unit sphere of $\t$ for the norm induced by the given invariant bilinear form on~$\g$, and let $\mathcal{S} \cap \mathcal{C}$ be the intersection of $\mathcal{S}$ with the given chamber. This is an open subset of $\mathcal{S}$. The previous arguments show that the the radial projection $\mathcal{S}_{\Omega}$ of  $E^{\mathcal{C}}_\Omega$ on $\mathcal{S}$ is an open subset of $\mathcal{S} \cap \mathcal{C}$. 

Given regular nilpotent orbits $\Omega$ and $\Omega'$, we deduce that $\mathcal{S}_\Omega \cap \mathcal{S}_{\Omega'}$ is an open subset of the $\mathcal{S} \cap \mathcal{C}$.

Now consider the set $\Sigma$ of all points on $\mathcal{S} \cap \mathcal{C}$ which arise as the radial projection of an element~$H$ corresponding to a representative in $\t^\ast$ for the Harish-Chandra parameter of some generic discrete series representation. Since the representatives for Harish-Chandra parameters of discrete series are (a translate of) an integral lattice, the set $\Sigma$ is dense in $\mathcal{S} \cap \mathcal{C}$. Therefore if the open set $\mathcal{S}_\Omega \cap \mathcal{S}_{\Omega'}$ is nonempty, there must exist at least one generic discrete series representation~$\pi$ with a Harish-Chandra parameter representative which projects to $\mathcal{S}_\Omega \cap \mathcal{S}_{\Omega'}$. By Lemma~\ref{lem:WF_and_AC} this means $\Omega$ and $\Omega'$ must be contained in $\WF(\pi)$, which implies $\Omega = \Omega'$ since the wavefront set is the closure of a single orbit if $\pi$ is a generic discrete series. 

We conclude that if $\Omega \neq \Omega'$, then $\mathcal{S}_\Omega$ and $\mathcal{S}_\Omega$ are disjoint, and therefore $\ECom$ and $\mathcal{E}^{\mathcal{C}}_{\Omega'}$ are disjoint open subsets of~$\mathcal{C}$. Since $\mathcal{C}$ is connected and equal to the union of all subsets $\mathcal{E}^{\mathcal{C}}_{\Omega}$, the chamber $\mathcal{C}$ must be equal to a single $\ECom$.

This proves the first assertion in the Proposition; the second assertion follows from the first and Lemma~\ref{lem:Kost_and_AC}.\end{proof}

\begin{corollary}
\label{c:same}
Suppose $\pi,\pi'$ are generic discrete series representations. Let  $\lambda,\lambda'$ be representatives
for the Harish-Chandra parameters of $\pi,\pi'$. If 
$\lambda,\lambda'$ are in the same  Weyl chamber then
$\pi,\pi'$ have the same associated variety, wave-front set, and Whittaker models.
\end{corollary}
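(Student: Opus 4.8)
The plan is to handle the three invariants one at a time, in each case reducing to a result from Section~\ref{sec:explicit} that already exhibits the invariant in terms depending only on chamber-level data; in particular I would \emph{not} go through Theorem~\ref{th:main}, but give the ``direct'' argument promised in the text.

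First, for the associated variety I would apply Lemma~\ref{l:AV}. It identifies $\AV(\pi)$ with the $K$-orbit of $\psi_\kappa(F_\pi)$, where $F_\pi=\sum_{\alpha\in S(\lambda)}X_{-\alpha}$ and $S(\lambda)$ is the set of simple roots of $\Delta^+(\lambda)=\{\alpha:\langle\lambda,\ch\alpha\rangle>0\}$. The key point is simply that $\Delta^+(\lambda)$, and hence $S(\lambda)$, depends only on the Weyl chamber of $\t^*$ in which $\lambda$ lies; since the root vectors $X_{-\alpha}$ were fixed once and for all (before the lemma), $\lambda$ and $\lambda'$ being in a common chamber gives $S(\lambda)=S(\lambda')$ and so $F_\pi=F_{\pi'}$ literally, whence $\AV(\pi)=\AV(\pi')$. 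Next, for the wave front set I would combine Lemma~\ref{lem:WF_and_AC}, which gives $\WF(\pi)=\AC(\mathcal{O}_\pi)$ with $\mathcal{O}_\pi=G(\R)\cdot\lambda$, with Proposition~\ref{prop:AC_chamber}. Transferring $\lambda,\lambda'$ to $\t$ via the fixed invariant form as in Section~\ref{s:dual}, they lie in a common Weyl chamber of $\t$, and this chamber is large because $\pi$ (hence $\pi'$) is generic (Section~\ref{sec:discrete_series}); Proposition~\ref{prop:AC_chamber} then yields $\AC(\O_\lambda)=\AC(\O_{\lambda'})$, i.e.\ $\WF(\pi)=\WF(\pi')$. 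Finally, for the Whittaker models there are two equivalent routes. One is to observe that once $\WF(\pi)=\WF(\pi')$ is known, Corollary~\ref{cor:matumoto}---equivalently, Proposition~\ref{invariants_ds}(c) together with the bijection $\w_X\leftrightarrow G(\R)\cdot X$---forces $\Wh(\pi)=\Wh(\pi')$. The other, which avoids the wave front set altogether, is to use Proposition~\ref{JeffTasho_criterion}: $\pi$ is $\w_X$-generic if and only if $\Kostant{X}$ meets $\mathcal{O}_\pi$, and the second assertion of Proposition~\ref{prop:AC_chamber} says precisely that this condition depends only on the (large) chamber, so the two sets of Whittaker data coincide.

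I do not expect a serious obstacle: the substantive work is already contained in Lemma~\ref{l:AV} and Proposition~\ref{prop:AC_chamber} (the latter resting on Lemma~\ref{lem:AC_containment}). The only things needing care are bookkeeping points---keeping track of the identification between $\t$ and $\t^*$ and between their chambers, verifying that the large-chamber hypothesis of Proposition~\ref{prop:AC_chamber} is in force, and noting that the assignment $\lambda\mapsto S(\lambda)$ genuinely factors through the chamber---none of which is more than routine.
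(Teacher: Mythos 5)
Your proposal is correct and follows essentially the same route as the paper, which likewise cites Lemma~\ref{l:pi_to_av} for the associated variety, Lemma~\ref{lem:WF_and_AC} together with Proposition~\ref{prop:AC_chamber} for the wave front set, and Lemma~\ref{lem:Kost_and_AC} (or Corollary~\ref{cor:matumoto}) for the Whittaker model. You have simply spelled out the bookkeeping (chamber-dependence of $S(\lambda)$, largeness of the chamber, the $\t\leftrightarrow\t^*$ identification) that the paper leaves implicit.
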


\begin{proof}
This follows from Lemma \ref{l:pi_to_av} for the associated variety;
from Lemma~\ref{lem:WF_and_AC} and Proposition~\ref{prop:AC_chamber} for the wave front set, and Lemma \ref{lem:Kost_and_AC} (or Corollary \ref{cor:matumoto})
for the Whittaker model.
\end{proof}

Here is a slightly stronger statement.

\begin{corollary}
\label{l:equalWhittaker}
Suppose $\pi$, $\pi'$ are generic discrete series representations of~$G(\R)$. Let~$\lambda, \lambda'$ be representatives for the Harish-Chandra parameters of~$\pi, \pi'$ respectively, and let $\mathscr{C}, \mathscr{C}'$ be the corresponding Weyl chambers. 

Then $\WF(\pi)=\WF(\pi')$ if and only if $\mathscr{C}, \mathscr{C}'$ are conjugate by $W_K$.
The same result holds for $\Wh(\pi)$ and $\AV(\pi)$. 
\end{corollary}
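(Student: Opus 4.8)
The plan is to deduce Corollary~\ref{l:equalWhittaker} from Corollary~\ref{c:same} together with the equivariance/transitivity package assembled for Theorem~\ref{th:main}. First I would observe that Corollary~\ref{c:same} already gives one direction for all three invariants simultaneously: if $\mathscr{C}$ and $\mathscr{C}'$ are $W_K$-conjugate, pick $w \in W_K$ with $w\mathscr{C}=\mathscr{C}'$; since $W_K=\mathrm{Norm}_K(T)/T$, the element $w$ is realized by some $k\in K$, and $\mathrm{Ad}(k)$ carries the $(\g,K)$-module $\pi$ to a module whose Harish-Chandra parameter representative $\mathrm{Ad}(k)\lambda$ lies in $\mathscr{C}'$; but $\mathrm{Ad}(k)$ is an inner automorphism, so it fixes the equivalence class, whence $\pi$ itself has a Harish-Chandra representative in $\mathscr{C}'$, i.e. $\lambda$ and $\lambda'$ lie in the same Weyl chamber (after replacing $\lambda$ by $w\lambda$), and Corollary~\ref{c:same} applies. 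Actually it is cleaner to note directly that $\pi_{\lambda,\zeta}=\pi_{\lambda',\zeta}$ whenever $\lambda,\lambda'$ are $W_K$-conjugate, by the parametrization recalled in Section~\ref{sec:discrete_series}; so $W_K$-conjugacy of the chambers forces $\pi$ and $\pi'$ to share a Harish-Chandra representative in a common chamber, and then Corollary~\ref{c:same} yields equality of all three invariants.

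For the converse I would argue by contraposition: suppose $\mathscr{C}$ and $\mathscr{C}'$ are \emph{not} $W_K$-conjugate, and show $\WF(\pi)\ne\WF(\pi')$ (and likewise for $\AV$ and $\Wh$). The key point is that, within a fixed $L$-packet $\Pi$, the generic members are in bijection with the $W_K$-classes of large Weyl chambers; and by the discussion preceding Theorem~\ref{th:main}, the finite group $Q(G)$ acts transitively on $\Pi_{\mathrm{gen}}$ and simply transitively on each of the target sets (Whittaker data, $K$-orbits on $\O_p\cap\s^*$, $G(\R)$-orbits on $\O_p\cap i\g(\R)^*$), with $\pi\mapsto\Wh(\pi)$, $\pi\mapsto\AV(\pi)$, $\pi\mapsto\WF(\pi)$ equivariant. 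Combining this with Theorem~\ref{th:main} itself, each of these three maps restricted to $\Pi_{\mathrm{gen}}$ is a bijection onto its target. Hence for $\pi,\pi'$ in the \emph{same} packet, $\WF(\pi)=\WF(\pi')\iff\pi=\pi'\iff$ their chambers are $W_K$-conjugate (using Corollary~\ref{c:same} plus the chamber-to-representation correspondence), and similarly for $\AV$ and $\Wh$.

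The one genuine subtlety is that the statement does not assume $\pi$ and $\pi'$ lie in the same $L$-packet. So I would first reduce to that case: $\WF(\pi)$, $\AV(\pi)$, $\Wh(\pi)$ are all built from nilpotent-orbit data and depend only on the infinitesimal-character-free geometry of the chamber, not on the central character $\zeta$; concretely, $\WF$ and $\AV$ depend only on the chamber $\mathscr{C}$ by Lemma~\ref{l:pi_to_av}, Lemma~\ref{lem:WF_and_AC} and Proposition~\ref{prop:AC_chamber}, and these facts make no reference to $\zeta$ or to which packet $\pi$ belongs. So without loss of generality I may replace $\pi'$ by the unique generic discrete series in the $L$-packet of $\pi$ whose chamber is $\mathscr{C}'$ (adjusting the central character), which changes neither $\WF(\pi')$ nor $\AV(\pi')$ nor $\Wh(\pi')$; then I am in the same-packet situation and the previous paragraph finishes the argument. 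The main obstacle I anticipate is bookkeeping: making the ``depends only on the chamber, not on the packet'' reduction airtight, i.e. checking that the $W_K$-action on chambers and the $Q(G)$-action on $\Pi_{\mathrm{gen}}$ are compatible across packets with different central characters. But since all the relevant formulas (\ref{e:Fpi}), (\ref{semisimple_orbit_HC}) are manifestly functions of $\lambda$ alone, and $\lambda$ ranges over a translate of a fixed lattice independent of $\zeta$, this compatibility is essentially automatic, and no new input beyond Corollary~\ref{c:same} and Theorem~\ref{th:main} is required.
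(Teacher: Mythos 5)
Your proof is correct and follows essentially the same route as the paper: the forward direction is Corollary~\ref{c:same}, and the converse reduces to the case where $\pi,\pi'$ lie in a common $L$-packet and then invokes Theorem~\ref{th:main} together with the Harish-Chandra parametrization ($\pi\simeq\pi'$ iff $\lambda,\lambda'$ are $W_K$-conjugate). Your treatment of the cross-packet reduction (replacing $\pi'$ by a generic member of $\pi$'s packet with parameter in $\mathscr{C}'$, justified by Corollary~\ref{c:same}) is in fact spelled out more carefully than the paper's terse ``we may assume $\lambda=w\lambda'$,'' while the $Q(G)$-equivariance digression merely re-derives the injectivity already contained in Theorem~\ref{th:main}.
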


\begin{proof}
If  $\mathscr{C}, \mathscr{C}'$ are in the same $W_K$-orbit, then the Harish-Chandra parameters of $\pi$, $\pi'$ have representatives in the same Weyl chamber; therefore $\Wh(\pi)=\Wh(\pi')$ by Corollary~\ref{c:same}.

Suppose $\WF(\pi)=\WF(\pi')$. By Lemma \ref{lem:Kost_and_AC} we may assume
$\lambda=w\lambda'$ for some $w\in W$, i.e. $\pi,\pi'$ are contained
in the same $L$-packet $\Pi$. In that case, by Theorem \ref{th:main} 
we know $\Wh(\pi)=\Wh(\pi')$ if and only if $\pi\simeq \pi'$.
This holds if and only if $\lambda'=w\lambda$ with $w\in W_K$, 
i.e. $\mathscr{C}$ and $\mathscr{C}'$ are $W_K$-conjugate.
\end{proof}

\subsection{$\AV$ and $\WF$: the Kostant--Sekiguchi correspondence } \label{sec:Sekiguchi}

Theorem~\ref{th:main} gives a bijection between the set $(\Op \cap \s^*)/K$ of principal nilpotent~$K$-orbits and the set $(\Op \cap i\g(\R)^*)/G(\R)$ of principal nilpotent~$G(\R)$-orbits. 

It is well known that there is a natural bijection between nilpotent $G(\R)$-orbits on $\g(\R)$ and nilpotent $K$ orbits on $\s$. 
This is the Kostant--Sekiguchi
correspondence~\cite{sekiguchi}, which we write as $\O \mapsto \KS(\O)$. It is a deep theorem of
Schmid and Vilonen (the main result of~\cite{SV1}) that the wavefront
set and the associated variety are related by this
correspondence. (As a historical aside, the Kostant--Sekiguchi
correspondence was discovered on the basis that the correspondence
$\AV(\pi) \leftrightarrow \WF(\pi)$ might be expressed in elementary
terms.)

As discussed in Section \ref{s:dual} we transfer the Kostant--Sekiguchi correspondence to $\g^*$ using  an isomorphism $\psi_\kappa$.
Explicitly, suppose $\O$ is a $G(\R)$-orbit in $\O_p\cap i\g(\R)^*$.  Then $\KS(\O)$ is defined by:
$$
\KS(\O)=\psi_\kappa(\KS(\psi_\kappa\inv(\O)).
$$
Since $\O\subset \gder$, and $\psi_\kappa$ restricted to each simple factor is unique up to non-zero scalar, it is easy to see this definition
is independent of the choice of $\kappa$. 

Here is a particular case of Schmid and Vilonen's theorem:

\begin{proposition} \label{Sekiguchi_result}
Suppose $\pi$ is a generic discrete series representation. Then
$$
\KS(\AV(\pi))=\WF(\pi).
$$
\end{proposition}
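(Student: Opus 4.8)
The plan is to prove Proposition~\ref{Sekiguchi_result} by reducing both sides to the explicit formulas already established, and then checking that the Kostant--Sekiguchi correspondence matches them up --- rather than invoking the full Schmid--Vilonen theorem. Concretely, by Lemma~\ref{l:AV} we have $\AV(\pi) = K\cdot\psi_\kappa(F_\pi)$ with $F_\pi = \sum_{\alpha\in S(\lambda)} X_{-\alpha}$, where $\kappa$ is the Killing form and $\lambda$ is a representative of the Harish-Chandra parameter. On the other hand, Corollary~\ref{cor:matumoto} (together with Lemma~\ref{lem:WF_and_AC}) tells us that $\WF(\pi) = G(\R)\cdot X$ exactly when $\Kostant{X}$ meets $\mathcal{O}_\pi = G(\R)\cdot\lambda$, and by Lemma~\ref{lem:Kost_and_AC} this is equivalent to $X \in \AC(\mathcal{O}_\pi)$. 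So what must be shown is: the Kostant--Sekiguchi correspondent of the $K$-orbit $K\cdot\psi_\kappa(F_\pi)$ in $\s^*$ is precisely the $G(\R)$-orbit in $i\g(\R)^*$ whose Kostant section meets $G(\R)\cdot\lambda$.

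The key step is to produce a single normalized $\SL(2)$-triple that simultaneously realizes $F_\pi$ as a Kostant--Sekiguchi representative and sits in the correct relation to $\lambda$. First I would choose, for the positive system $\Delta^+(\lambda)$ with simple roots $S(\lambda)$, the standard principal $\SL(2)$-triple $(e,h,f)$ attached to $S(\lambda)$: take $f = F_\pi = \sum_{\alpha\in S(\lambda)} X_{-\alpha}$, let $h\in\t$ be the unique element with $\alpha(h)=2$ for all $\alpha\in S(\lambda)$, and let $e$ be the matching raising element. Because $S(\lambda)$ consists of \emph{noncompact} simple roots (the Weyl chamber of $\lambda$ is large, since $\pi$ is generic), one can normalize $X_{\pm\alpha}$ so that $\theta(X_{-\alpha}) = -X_\alpha$ for $\alpha\in S(\lambda)$; this is the standard Cayley-type normalization that makes $(e,h,f)$ a ``Kostant--Sekiguchi triple'', i.e. $\theta e = -f$, $\theta f = -e$, $\theta h = -h$, up to the usual Cayley transform $u\mapsto \frac{1}{2}(e+f) + \frac{i}{2}(e-f)$, etc. This places $F_\pi$ in $\s$ and identifies $K\cdot F_\pi$ under $\KS$ with the $G(\R)$-orbit of the corresponding real nilpotent built from the same triple.

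Next I would connect this triple to the semisimple orbit $\mathcal{O}_\pi$. The element $h\in\t$ is dominant and regular-in-the-relevant-sense for $\Delta^+(\lambda)$, and $\lambda$ lies in the open chamber $\mathfrak{t}^+$ cut out by $\Delta^+(\lambda)$; since $h$ is a strictly dominant rational point of that same chamber, $\lambda$ and $h$ lie in the same large Weyl chamber. By Proposition~\ref{prop:AC_chamber}, $\AC(\mathcal{O}_\lambda) = \AC(\mathcal{O}_h)$ and the Kostant-section incidence relation is constant on the chamber; and for the semisimple element $h$ attached to the principal $\SL(2)$-triple $(e,h,f)$, it is a classical fact that $f\in\AC(G(\R)\cdot h)$ --- indeed $\lim_{t\to\infty} t^{-1}\,\Ad(\exp(t\,\text{something}))\,h$ limits onto the nilpotent, or more cleanly, $\Kostant{f} = f + \Cent_\g(e) \ni h$ up to the $\SL(2)$-action (this is exactly the statement that the Kostant section through a principal nilpotent meets every regular semisimple orbit in its chamber, which is what Lemma~\ref{lem:Kost_and_AC} encodes). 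Putting this together: $\WF(\pi) = G(\R)\cdot f_{\R}$ where $f_\R$ is the real form of $f$, which is by construction $\KS(K\cdot F_\pi) = \KS(\AV(\pi))$ after transport by $\psi_\kappa$.

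\textbf{Main obstacle.} The delicate point is the sign/normalization bookkeeping: the Kostant--Sekiguchi correspondence is defined via a specific Cayley transform and a specific choice of $\theta$-stable $\SL(2)$-triple, and Lemma~\ref{l:AV} pins down $\AV(\pi)$ using a \emph{positive} multiple of the Killing form (the Remark after Lemma~\ref{l:AV} stresses that $\psi_\kappa(F_\pi)$ and $\psi_{-\kappa}(F_\pi)$ need not be $K$-conjugate). I would need to verify that the triple $(e,h,f)$ with $f=F_\pi$ is genuinely a Kostant--Sekiguchi triple for \emph{that} sign convention --- i.e., that the noncompactness of the simple roots in $S(\lambda)$ forces $\theta$ to act on the relevant $\sl(2)$ with the eigenvalue pattern required by Sekiguchi's normalization, and that no global sign is lost when transferring to $\g^*$ via the Killing form. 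Once the conventions are aligned this is bookkeeping, but it is the step where an error would invalidate the argument, so it deserves care; alternatively, if this becomes unwieldy, one falls back on citing \cite{SV1} directly, which is the honest source of the result.
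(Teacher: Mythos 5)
Your proposal follows essentially the same route as the paper's own proof in Section~\ref{sec:sekiguchi_proof}: reduce $\AV(\pi)$ to $K\cdot\psi_\kappa(F_\pi)$ via Lemma~\ref{l:AV}, compute $\KS$ explicitly on the $\theta$-stable principal $\mathrm{SL}(2)$-triple built from $F_\pi$, and identify the resulting real orbit with $\WF(\pi)$ via Corollary~\ref{cor:matumoto} and Lemma~\ref{lem:Kost_and_AC}, using the fact that the Kostant section of the relevant nilpotent contains the neutral element $H_\theta$ of the triple, which lies in the same Weyl chamber as $\lambda$ (Proposition~\ref{prop:AC_chamber}). The sign bookkeeping you flag as the main obstacle is resolved in the paper exactly as you anticipate, by normalizing $\sigma(E_\theta)=F_\theta$ and using a positive multiple of the Killing form so that $\psi(H_\theta)$ is dominant for $\Delta^+(\lambda)$.
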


We shall do two things here. First, we will quote a result of \cite{AVAV} which makes this case of the Kostant--Sekiguchi correspondence completely explicit. Second, we shall give a new and elementary proof of Proposition~\ref{Sekiguchi_result}: this avoids the use of Schmid and Vilonen's difficult results, and instead uses the results above (in particular Proposition~\ref{JeffTasho_criterion}) as the main ingredients.

\subsubsection{The Sekiguchi correspondence for principal nilpotent orbits}\label{sec:concrete_sek}
Let us spell out the special case of the Sekiguchi correspondence which is of interest here. We follow~\cite[Section~2]{AVAV}.

Suppose $E_\R\in i\g(\R)$ is a principal nilpotent element.
We may choose $H_\R, E_\R \in \g$ such that $(H_\R,E_\R,F_\R)$ is an $\mathrm{SL}(2)$-triple and  $E_\R$  contained in $i\g(\R)$,
which implies $H_\R\in \g(\R)$. 
After conjugating by~$G(\R)$ we may further assume $\theta(E_\R)=-F_\R$. Set 
\[   E_\theta=\frac12(E_\R-F_\R+H_\R)\quad
F_\theta=\frac12(-E_\R+F_\R+H_\R).
\]
Then
$$
\KS(G(\R)\cdot E_\R)=K\cdot E_\theta.
$$
Computing the map in the other direction is similar. Suppose $E_\theta\in \s$ is a principal nilpotent element. 
Choose $F_\theta, H_\theta \in \g$ such that $(E_\theta, H_\theta, F_\theta)$ is an $SL(2)$-triple with $F_\theta \in \s$,  which implies $H_\theta\in\k$. After conjugating by $K$ we may further assume $\sigma(E_\theta)=F_\theta$.
Define 
\begin{equation}\label{def_F_R}
E_\R=\frac12(E_\theta-F_\theta-H_\theta)\quad
F_\R=\frac12(-E_\theta+F_\theta-H_\theta).
\end{equation}
Then
\begin{equation}
\label{e:KS}
\KS(K\cdot E_\theta)=G(\R)\cdot E_\R\quad\KS(K\cdot F_\theta)=G(\R)\cdot(-E_\R).
\end{equation}

We need the corresponding result in $\g^*$. As discussed in Section \ref{s:dual} this uses a choice of an isomorphism $\psi\colon \g\to \g^*$, but the result is independent of this choice.
That is for $X\in \O_p\subset \g^*$ we have
$$
\KS(K\cdot X)=\psi(\KS(K\cdot \psi\inv(X)),
$$
independent of the choice of $\psi$.

\subsubsection{A simple proof of Proposition~\ref{Sekiguchi_result}}\label{sec:sekiguchi_proof}

Let us fix a generic discrete representation~$\pi$ and let $\lambda$ be its Harish-Chandra parameter.
As in Section \ref{s:AV} we let $\kappa$ be the Killing form and $\psi\colon\gder\to \gder^*$ be the corresponding isomorphism.

Consider the principal nilpotent element~$F_\pi$ defined in~\eqref{e:Fpi}, so by Lemma~\ref{l:AV} $\AV(\pi)=K\cdot \psi(F_\pi)$. 
We use this as~$F_\theta$ in the previous subsection: define $E_\theta = \sum_{\alpha \in S(\lambda)}X_\alpha$ and $H_\theta  = [E_\theta, F_\theta]$; then $E_\theta \in \s$ and $(E_\theta, H_\theta, F_\theta)$ is an $\mathrm{SL}(2)$-triple. Define another $\mathrm{SL}(2)$-triple $(E_\R, F_\R, H_\R)$ by
\[ \begin{aligned}
  F_\R&=\frac12(-E_\theta+F_\theta-H_\theta)\\
  E_\R&=\frac12(E_\theta-F_\theta-H_\theta)\\
  H_\R&=E_\theta+F_\theta.
\end{aligned}\]
By the discussion in the previous section $\KS(K\cdot \psi(F_\theta))=\psi(\KS(K\cdot F_\theta))=\psi(G(\R)\cdot (-E_\R))=G(\R)\cdot \psi(-E_\R)$.
We need to show this equals $\WF(\pi)$. By Corollary \ref{cor:matumoto} it is enough to show $\Wh(\pi)=\w_{\psi(-E_
  \R)}$; by 
   Lemma \ref{lem:Kost_and_AC} this is equivalent to $\K(\psi(-E_\R))$ meets $G(\R)\cdot\lambda$. 

We compute $\K(\psi(-E_\R))=\psi(\K(-E_\R))\ni -E_\R-F_\R=H_\theta$ (by \eqref{def_F_R}).
Thus $\K(\psi(-E_\R))$ meets $\psi(H_\theta)$. 
Since  $\psi(H_{\theta})$ and $\lambda$ are in the same Weyl chamber, by Proposition \ref{prop:AC_chamber} we conclude 
$\K(\psi(-E_\R))$ meets $\lambda$ as required.

\subsubsection{A simple description of the wavefront set}\label{sec:alternate_WF}

As we saw the correspondence $\pi \leftrightarrow \WF(\pi)$ can be described using the asymptotic cone of the semisimple orbit of the Harish-Chandra parameter. We used this in the proof of Proposition~\ref{Sekiguchi_result}. Now, the results of Sections~\ref{s:AV}, \ref{sec:concrete_sek} and~\ref{sec:sekiguchi_proof} provide another description of $\WF(\pi)$ from the Harish-Chandra parameter: we can compose the maps (1)$\leftrightarrow$(3)$\leftrightarrow$(4). This gives the following rather concrete conclusion.
As in Section \ref{s:AV} we define $\psi:\gder\simeq \gder^*$ using the Killing form.  

\begin{lemma}\label{lem:explicit_WF}
  Let $\pi$ be a generic discrete series representation. Fix a representative  $\lambda\in \t^*$ of the Harish-Chandra parameter of $\pi$.
Define $F_\theta=F_\pi$ as in \eqref{e:Fpi}, and choose  $E_\theta$, $H_\theta$ such that $(H_\theta,E_\theta,F_\theta)$ is an $\mathrm{SL}(2)$-triple with $\sigma(E_\theta)=F_\theta$.  Then
$$
\WF(\pi)=G(\R)\cdot \psi(F_\R), \quad \text{where $F_\R = \frac i2(-E_\theta+F_\theta-H_\theta)\in\N\cap i\g(\R)$.}
$$\end{lemma}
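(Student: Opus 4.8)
The plan is to simply chain together the two explicit dictionary maps already established: the formula for $\AV(\pi)$ in Lemma~\ref{l:AV}, and the explicit Kostant--Sekiguchi recipe together with the computation carried out in Section~\ref{sec:sekiguchi_proof}. First I would recall that by Lemma~\ref{l:AV} we have $\AV(\pi)=K\cdot\psi(F_\pi)$ with $F_\pi=\sum_{\alpha\in S(\lambda)}X_{-\alpha}$, and that by Proposition~\ref{Sekiguchi_result} the wavefront set is obtained by applying the Kostant--Sekiguchi correspondence: $\WF(\pi)=\KS(\AV(\pi))=\KS(K\cdot\psi(F_\pi))$. So the whole content is to unwind $\KS$ on the principal $K$-orbit through $\psi(F_\pi)$ using the concrete description in Section~\ref{sec:concrete_sek}.

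Next I would set $F_\theta=F_\pi$ and invoke the recipe of \S\ref{sec:concrete_sek}: choose $E_\theta=\sum_{\alpha\in S(\lambda)}X_\alpha\in\s$ and $H_\theta=[E_\theta,F_\theta]$, so that $(E_\theta,H_\theta,F_\theta)$ is an $\SL(2)$-triple with $F_\theta\in\s$ and $H_\theta\in\k$; the hypothesis $\sigma(E_\theta)=F_\theta$ in the statement is exactly the normalization allowed after conjugating by $K$, so we may assume it holds (and note that conjugating by $K$ does not change the $K$-orbit of $\psi(F_\theta)$, hence does not change $\AV(\pi)$, so no generality is lost). Then formula~\eqref{def_F_R} gives $F_\R=\tfrac12(-E_\theta+F_\theta-H_\theta)$ and~\eqref{e:KS} gives $\KS(K\cdot F_\theta)=G(\R)\cdot(-E_\R)$ where $E_\R=\tfrac12(E_\theta-F_\theta-H_\theta)$. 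Transferring to $\g^*$ via $\psi$ as in the last paragraph of \S\ref{sec:concrete_sek}, we obtain $\KS(K\cdot\psi(F_\theta))=G(\R)\cdot\psi(-E_\R)$. The remaining point is purely notational: observe that $\psi(-E_\R)=\psi\!\left(\tfrac12(-E_\theta+F_\theta-H_\theta)\right)$, which, after carrying the factor $i$ coming from the identification $i\g(\R)^*\subset\g^*$ used throughout (so that nilpotent elements of $i\g(\R)$ are the relevant real nilpotents), is exactly the element $\psi(F_\R)$ with $F_\R=\tfrac i2(-E_\theta+F_\theta-H_\theta)\in\N\cap i\g(\R)$ as written in the statement. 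Combining, $\WF(\pi)=\KS(\AV(\pi))=G(\R)\cdot\psi(F_\R)$.

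The only genuinely substantive input is Proposition~\ref{Sekiguchi_result}, which has already been proved (twice over) in the excerpt, so there is no real obstacle; the main thing to be careful about is bookkeeping of the factors of $\tfrac12$ and $i$, and making sure the normalizations $\theta(E_\R)=-F_\R$ / $\sigma(E_\theta)=F_\theta$ are consistently matched between \S\ref{sec:concrete_sek}, \S\ref{sec:sekiguchi_proof}, and the statement of the lemma. Alternatively — and this is perhaps the cleanest write-up — one can bypass even Proposition~\ref{Sekiguchi_result} and argue directly as in \S\ref{sec:sekiguchi_proof}: by Corollary~\ref{cor:matumoto} it suffices to show $\Wh(\pi)=\w_{\psi(F_\R)}$, and by Lemma~\ref{lem:Kost_and_AC} this amounts to checking that the Kostant section $\K(\psi(F_\R))$ meets $\O_\pi=G(\R)\cdot\lambda$; one computes $\K(\psi(F_\R))=\psi(\K(F_\R))\ni\psi(F_\R)+(\text{its }Y)=\psi(H_\theta)$ (up to the scalar $i$) using~\eqref{def_F_R}, and since $\psi(H_\theta)$ lies in the same Weyl chamber as $\lambda$, Proposition~\ref{prop:AC_chamber} finishes the job. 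I would present this self-contained version, as it matches the style of \S\ref{sec:sekiguchi_proof} and keeps the note elementary.
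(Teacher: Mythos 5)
Your overall strategy is exactly the one the paper intends: Lemma~\ref{lem:explicit_WF} is presented as the composition of the maps $(1)\leftrightarrow(3)\leftrightarrow(4)$, i.e.\ Lemma~\ref{l:AV} followed by the explicit Sekiguchi recipe of Section~\ref{sec:concrete_sek}, and your ``self-contained'' variant is essentially verbatim the argument of Section~\ref{sec:sekiguchi_proof}. So there is no divergence of method.

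There is, however, a genuine gap in your final bookkeeping, and it matters because the entire content of the lemma is to pin down the correct \emph{real} orbit rather than the complex one. You write $\psi(-E_\R)=\psi\bigl(\tfrac12(-E_\theta+F_\theta-H_\theta)\bigr)$ and call this ``purely notational,'' but $-E_\R=\tfrac12(-E_\theta+F_\theta+H_\theta)$, with $+H_\theta$: this is a different element from $\tfrac12(-E_\theta+F_\theta-H_\theta)$, and in general lies in a different $G(\R)$-orbit --- in the $SL(2,\R)$ appendix $-E_\R$ and $F_\R$ are \emph{not} conjugate (rather $F_\R\sim E_\R$, and $cE_\R\sim E_\R$ only for $c>0$). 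Likewise ``after carrying the factor $i$'' is not an argument: when $\sigma(E_\theta)=F_\theta$ the element $\tfrac12(-E_\theta+F_\theta-H_\theta)$ already lies in $i\g(\R)$ (since $\sigma$ swaps $E_\theta,F_\theta$ and negates $H_\theta=[E_\theta,F_\theta]$), so multiplying by $i$ moves it into $\g(\R)$ and changes its $G(\R)$-orbit; that factor cannot be inserted or removed ``up to scalar.'' The same issue infects your second route: the Kostant section of $\tfrac12(-E_\theta+F_\theta-H_\theta)$ contains $-H_\theta$ rather than $H_\theta$, so Proposition~\ref{prop:AC_chamber} would place you in the opposite Weyl chamber. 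What Section~\ref{sec:sekiguchi_proof} actually yields is $\WF(\pi)=G(\R)\cdot\psi(-E_\R)$ with $-E_\R=\tfrac12(-E_\theta+F_\theta+H_\theta)\in\N\cap i\g(\R)$ and no factor of $i$; to match the element printed in the lemma you must either prove it is $G(\R)$-conjugate to this one or flag the discrepancy in the statement --- in either case the identification has to be argued, not asserted.
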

There is a partial converse to this statement if we consider discrete series within a single $L$-packet. Fix an $L$-packet~$\Pi$ of discrete series, and consider a regular semisimple orbit~$\Omega$ in~$\Op \cap i\g(\R)^\ast$; then we can determine the generic representation~$\pi \in \Pi$ with $\WF(\pi)=\Omega$, as follows. Fix~$\xi_\R \in \Omega$, write $\xi_\R=F_\R^\ast$ with $F_\R \in \g$ and let $E_\R, H_\R \in \g$ be such that $(E_\R, H_\R, F_\R)$ is an  $\mathrm{SL}(2)$-triple with $\theta(E_\R)=-F_\R$. Set
$$
H_\theta=E_\R-F_\R.
$$
Then $H_\theta$ is a regular semisimple element, and belongs to~$\k$.
Define a Cartan subalgebra~$\t$ of~$\g$ by $\t=\Cent_{\g}(H_\theta)$, and a set of positive roots by
$$
\Delta^+=\{\alpha\in\Delta(\t,\g)\mid  \alpha(H_\theta)>0\}.
$$
Then $\pi$ is the discrete series representation in $\Pi$ whose Harish-Chandra parameter is dominant for $\Delta^+$. 

\section{Appendix: the case of $SL(2,\R)$}

We illustrate the results of Section~\ref{sec:explicit} by making the case of $SL(2,\R)$ fully explicit.

\subsection{Notation}

The  Lie algebra $\g(\R)$  of $G=SL(2,\R)$  is the space of $2\times 2$  matrices of trace~$0$.
Let $\g=\sl(2,\C)$ be the complexified Lie algebra.
Then $\g(\R)=\g(\C)^\sigma$ where $\sigma\colon X \to \overline X$ is complex conjugation. 
Set $\theta(X)=X^t$, so $\g(\C)^\theta$  is the complexified Lie algebra of a maximal compact subgroup of $SL(2,\R)$.

We begin by defining two  bases of~$\g$. Start with the following basis:
$$
E_\R=\begin{pmatrix}0&i\\0&0\end{pmatrix},\quad 
F_\R=\begin{pmatrix}0&0\\-i&0\end{pmatrix},\quad
H_\R=\begin{pmatrix}1&0\\0&-1\end{pmatrix}.
$$
These elements satisfy the standard $SL(2)$ commutation relations: 
\[ [E_\R, F_\R] = H_\R, \quad [H_\R, E_\R] = 2E_\R, \quad [H_\R, F_\R] = -2F_\R. \]

Furthermore $E_\R,F_\R$ are in the $(-1)$-eigenspace of $\sigma$, and we have
$\theta(E_\R)=F_\R$ and $\theta(H_\R)=-H_\R$.

It is helpful to keep in mind that $E_\R,F_\R$ are $G(\R)$-conjugate,
and for $c\in \R^*$ $cE_\R$ is $G(\R)$-conjugate to $E_\R$ if and only if $c>0$.

Set $g=\frac 1{\sqrt 2}\begin{pmatrix}1&i\\i&1
\end{pmatrix}$, and take $E_\theta,F_\theta,H_\theta$ be the conjugates of $E_\R,F_\R,H_\R$ by~$g$:
$$
E_\theta=\frac12\begin{pmatrix}1&i\\i&-1\end{pmatrix}, \quad
F_\theta=\frac12\begin{pmatrix}1&-i\\-i&-1\end{pmatrix},\quad
H_\theta=\begin{pmatrix}0&-i\\i&0\end{pmatrix}.
$$
These also satisfy the standard $SL(2)$ commutation relations; $E_\theta,F_\theta$ are in the $(-1)$-eigenspace of $\theta$; 
and  $\sigma(E_\theta)=F_\theta,\sigma(H_\theta)=-H_\theta$.

The change of bases are given by:
$$
\begin{aligned}
  E_\theta&=\frac12(E_\R-F_\R+H_\R)\\
  F_\theta&=\frac12(-E_\R+F_\R+H_\R)\\
  H_\theta&=-E_\R-F_\R
\end{aligned}
$$
and
$$
\begin{aligned}
  E_\R&=\frac12(E_\theta-F_\theta-H_\theta)\\
  F_\R&=\frac12(-E_\theta+F_\theta-H_\theta)\\
  H_\R&=E_\theta+F_\theta.
\end{aligned}
$$

Define the dual basis
 $\{E_\R^*,F_\R^*,H_\R^*\}$ of $\g^*$ as usual,  i.e. $E_\R^*(E_\R)=1$, $E_\R^*(F_\R)=E_\R^*(H_\R)=0$, etc. Define 
 $\{E_\theta^*,F_\theta^*,H_\theta^*\}$ similarly.
 
\medskip

We now introduce notation for tori and roots. Let $\t(\C)=\{t_z\mid z\in\C\}$ where
$$
t_z=\begin{pmatrix}0&z\\-z&0
\end{pmatrix}.
$$
Then $\t(\R)=\{t_x\mid x\in\R\}$ is the Lie algebra of a compact Cartan subalgebra of $\g$ (the corresponding Cartan subgroup of~$G(\R)$ is also a maximal compact subgroup).

For $z\in \C$ define $\lambda_z\in \t(\C)^*$ by $\lambda_z(t_x)=xz$.
Note that $t_i=-H_\theta$, $\lambda_z(t_i)=iz$ and $H_\theta^*(t_i)=-1$.
Write $\lambda_z=cH_\theta^*$ for some $c$. Evaluating both sides on $t_i=-H_\theta$ gives
$c=-iz$:
\begin{equation}
\label{e:lambdaz}
\lambda_z=-izH_\theta^*.
\end{equation}
In particular, for $k \in \Z$ we have
\begin{equation}
\label{e:lambdai}
\lambda_{ik}=kH_\theta^*.
\end{equation}

The positive root can be taken to be   $\alpha\colon t_z \to 2iz$, i.e.
$\alpha=\lambda_{2i}$; therefore $\rho=\frac12\alpha$ is~$\lambda_i$.
Since the coroot~$\ch\alpha$ satisfies $\alpha(\ch\alpha)=2$, we have
$$
\ch\alpha=t_{-i}
$$
therefore
$$
\langle \lambda_{ik},\ch\alpha\rangle=\lambda_{ik}(t_{-i})=k
$$
for $k \in \Z$. 

Viewing $\ch\alpha$ as a map from $\C$ to $\ch{\mathfrak t}$ this says
$$
\ch\alpha(z)=t_{-iz}\quad (z\in \C).
$$

Let $\pi(\lambda_{ik})$ be the discrete series representation with Harish-Chandra parameter $\lambda_{ik}$ $(k\in \Z_{\ne 0})$.

\subsection{Identifying  $\g$ and $\g^*$}

For $c\in \R^*$ define
$$
\kappa_c(X,Y)=c\mathrm{Tr}(XY)\quad (X,Y\in \g(\R)).
$$
This is a non-degenerate, $G(\R)$ invariant bilinear form on $\g(\R)$, and extends to a $\sigma$-invariant, $G(\C)$-invariant bilinear form on $\g$. 
It is well known that every such form arises this way.
The Killing form $\kappa$ is equal to $\kappa_4$, 
as is checked by computing  $\mathrm{tr}(\ad(t_z)\ad(t_w))=-8zw$, and 
 $\kappa_c(t_z,t_w)=-2czw$.

Given $\kappa_c$ we obtain an isomorphism $\g(\R)\simeq \g(\R)^*$:
$$
\psi_c(X)(Y)=\kappa_c(X,Y)\quad (X,Y\in \g(\R))
$$
and this extends to an isomorphism $\psi_c: \g\rightarrow \g^*$. 

Explicitly we compute
$$
\psi_c(H_\R)(H_\R)=2c, \psi_c(H_\R)(E_\R)= \psi_c(H_\R)(F_\R)=0
$$
so  $\psi_c(H_\R)=2cH_\R^*$.
The result is
\begin{equation}
\label{e:psic1}
\begin{aligned}
\psi_c(H_\R)&=2cH_\R^*\\
\psi_c(E_\R)&=cF_\R^*\\
\psi_c(F_\R)&=cE_\R^*.
\end{aligned}
\end{equation}
Since we obtained the ``$\theta$-basis" by conjugating, the formulas in this basis are the same:
\begin{equation}
\label{e:psic2}
\begin{aligned}
\psi_c(H_\theta)&=2cH_\theta^*\\
\psi_c(E_\theta)&=cF_\theta^*\\
\psi_c(F_\theta)&=cE_\theta^*.
\end{aligned}
\end{equation}

Recall that for $\mu\in \g^*$, we have
$$
\AC(\mu)=\psi_c^{-1}(\AC(\psi_c(\mu)))
$$
and
$$
\K(\mu)=\psi_c^{-1}(\K(\psi_c(\mu))),
$$
independent of $c$.

\subsection{Wave-front set}

Suppose~$k \in \Z_{\neq 0}$. Let us compute $\WF(\pi_{\lambda_{ik}})$ by  \cite{HarrisHeOlafsson}, which says:
$$
\WF(\pi(\lambda_{ik}))=\AC(G(\R)\cdot \lambda_{ik}).
$$

First we compute $\AC(G(\R)\cdot kH_\theta)=\AC(G(\R)\cdot\begin{pmatrix}0&-ik\\ik&0
\end{pmatrix})$ for $k\in \Z_{\ne 0}$.
Here we are taking the real asymptotic cone; since $kH_\theta\in i\g(\R)$ this is a subset of $i\g(\R)$. 
Taking the limit as $x\rightarrow \infty$ in 
$$
\begin{pmatrix}
x&0\\0&\frac 1x
\end{pmatrix}
\begin{pmatrix}0&-ik\\ik&0
\end{pmatrix}\begin{pmatrix}
\frac 1x&0\\0&x
\end{pmatrix}=
\begin{pmatrix}0&-ikx^2\\\frac{ik}{x^2}&0
\end{pmatrix}
$$
we see
$$
\AC(G(\R)\cdot kH_\theta)=-G(\R)\cdot\sgn(k)E_\R.
$$
Now we carry this computation over to $i\g(\R)^*$:
$$
\begin{aligned}
\AC(G(\R)\cdot\lambda_{ik})&=\AC(G(\R)\cdot kH_\theta^*)\\
&=\psi_c(\AC(G(\R)\cdot \psi_c\inv(kH_\theta^*))\\
&=\psi_c(-G(\R)\cdot\sgn(\frac k{2c})E_\R)\\
&=-G(\R)\cdot\sgn(\frac k{2c})cF_\R^*\\
&=-G(\R)\cdot\sgn(k)F_\R^*\\
&=G(\R)\cdot(-\sgn(k)E_\R^*).
\end{aligned}
$$
In the last line we used the fact that $E_\R^*,F_\R^*$ are $G(\R)$-conjugate.
Note that $c$ disappeared from the end result, as expected.

Therefore by 
\cite{HarrisHeOlafsson} we have
\begin{equation}
\label{e:WF}
\WF(\pi(\lambda_{ik}))=G(\R)\cdot (-\sgn(k)E_\R^*).
\end{equation}
In particular note that $\WF(\pi_{ik})=\WF(\pi_{i\ell})\Leftrightarrow \sgn(k)=\sgn(\ell)$.
We also conclude
$$
\Wh(\pi(\lambda_{ik}))=\w_X\Leftrightarrow X\sim_{G(\R)} -\sgn(k)E_\R^*.
$$

\subsection{Associated variety}

We apply Lemma \ref{l:AV}. It is easy to see
$$
F_{\pi(\lambda_{ik})}=
\begin{cases} F_\theta&\text{ if $k>0$;}\\
  E_\theta& \text{ if $k<0$.}
\end{cases}
$$
Recall that in Section \ref{s:AV} we use the isomorphism $\psi$ defined using the Killing form $\kappa_4$, which satisfies
$\psi(F_\theta)=4E^*_\theta, \psi(E_\theta)=4F^*_\theta$. 
Therefore
\begin{equation}
  \label{e:AV}
\AV(\pi(\lambda_{ik}))=
\begin{cases}K\cdot E_\theta^*&k>0;\\
  K\cdot F_\theta^*&k<0.\\
\end{cases}
\end{equation}

Let us see that this computation and that of the previous section agree with Proposition \ref{Sekiguchi_result}, which says $\KS(\AV(\pi))=\WF(\pi)$ in that case.

Suppose $\pi=\pi(\lambda_{ik})$. First assume $k>0$ and compute
$$
\begin{aligned}
\KS(\AV(\pi))&=  \KS(K\cdot E_\theta^*)\quad \text{by \eqref{e:AV}}\\
&=  \psi_c(\KS(K\cdot \psi_c\inv (E_\theta^*)))\\
  &=  \psi_c(\KS(K\cdot \frac 1c F_\theta))\\
&=\psi_c(G(\R)\cdot \frac 1c(-E_\R))\quad\text{(by \eqref{e:KS})}\\
&=  G(\R)\cdot \psi_c(\frac{-1}c F_\R))\\
&=G(\R)\cdot (-E_\R^*)\\
&=\WF(\pi)\quad \text{(by \eqref{e:WF})}
\end{aligned}
$$
which agrees with 
Proposition \ref{Sekiguchi_result}. If $k<0$, replace $E_\theta^*$ with $F_\theta^*$, and conclude $\KS(\AV(\pi))=G(\R)\cdot E_\R^*$, in accordance with
Proposition \ref{Sekiguchi_result}.

\subsection{Whittaker model}

Finally, let us check Proposition \ref{p:Kostant}, 
which says $\Wh(\pi)=\w_X\Leftrightarrow \K(X)\cap\O_\pi\ne \emptyset$. 
Assume $k>0$. Then by~\eqref{e:WF} and Corollary~\ref{cor:matumoto} we have
$$
\Wh(\pi(\lambda_{ik}))=\mathfrak{w}_{-E_\R^*}.
$$
On the other hand, we can compute
$$
\begin{aligned}
\K(-E_\R^*)&=\psi_c(\K(\psi_c\inv(-E_\R^*)))\\
&=\psi_c(\K( -\frac 1cF_\R)).\end{aligned}$$
This contains
$$ \psi_c\left(\frac 1c(-F_\R-E_\R)\right)=\psi_c(\frac 1cH_\theta)=2H_\theta^*.
$$
So we do have
$$
\K(-E_\R^*)\cap G(\R)\cdot 2H_{\theta}^*\ne\emptyset\quad (k>0).
$$
The case $k<0$ is similar; in that case we conclude
$\Wh(\pi_{\lambda_{ik}})=\mathfrak{w}_{E_\R^\ast}$, and $\K(E_\R^*)\cap G(\R)\cdot (-2H_\theta)\ne \emptyset$.

\bibliographystyle{plain}
\bibliography{Refs}

\end{document}